\newcommand{\remove}[1]{}
\newtheorem*{rep@theorem}{\rep@title}
\newcommand{\newreptheorem}[2]{%
\newenvironment{rep#1}[1]{%
 \def\rep@title{#2 \ref{##1}}%
 \begin{rep@theorem}}%
 {\end{rep@theorem}}}
\newtheorem{thm}{Theorem}[section]
\newtheorem{claim}[thm]{Claim}
\newtheorem{lem}[thm]{Lemma}
\newtheorem{cor}[thm]{Corollary}
\newtheorem{conjecture}[thm]{Conjecture}
\newtheorem{question}{Question}
\newtheorem{prop}[thm]{Proposition}
\newtheorem{notation}[thm]{Notation}
\theoremstyle{definition}
\newtheorem{define}[thm]{Definition}
\newtheorem{rem}[thm]{Remark}
\def\R{{\mathbb{R}}}
\def\cP{{\cal P}}
\def\cM{{\cal M}}
\def\cH{{\cal H}}
\def\cS{{\mathcal S}}
\newcommand{\Span}{\operatorname{span}}
\newcommand{\eps}{\epsilon}
\begin{document}

\title{Spread Furstenberg Sets}

\author{Paige Bright\footnote{Department of Mathematics, University of British Columbia. Email \href{paigeb@math.ubc.ca}{\nolinkurl{paigeb@math.ubc.ca}}.} and Manik Dhar\footnote{Department of Mathematics, Massachusetts Institute of Techonology. Email: \href{dmanik@mit.edu}{\nolinkurl{dmanik@mit.edu}}. Part of this work was done at Princeton University where the author was supported by NSF grant DMS-1953807.}}

\date{}

\maketitle

\begin{abstract}
We obtain new bounds for (a variant of) the Furstenberg set problem for high dimensional flats over $\R^n$. In particular, let $F\subset \R^n$, $1\leq k \leq n-1$, $s\in (0,k]$, and $t\in (0,k(n-k)]$. We say that $F$ is a $(s,t;k)$-spread Furstenberg set if there exists a $t$-dimensional set of subspaces $\mathcal P \subset \mathcal G(n,k)$ such that for all $P\in \mathcal P$, there exists a translation vector $a_P \in \R^n$ such that $\dim(F\cap (P + a_P)) \geq s$. We show that given $k \geq k_0 +1$  (where $k_0:= k_0(n)$ is sufficiently large) and $s>k_0$, every $(s,t;k)$-spread Furstenberg set $F$ in $\R^n$ satisfies
\[
\dim F \geq n-k + s - \frac{k(n-k) - t}{\lceil s\rceil -k_0 +1}.
\]
Our methodology is motivated by the work of the second author, Dvir, and Lund over finite fields.
\end{abstract} 



\section{Introduction}

In this paper, we study a natural variant of the Furstenberg set problem for high dimensional flats over $\R^n$. To put this variant (and similarly our results) into context, we begin with some background on Furstenberg sets: a fractal analogue of Kakeya/Besicovitch sets.

\subsection{The Kakeya Conjecture and Furstenberg Sets Associated to Lines}

A \textit{Kakeya set}, sometimes referred to as a \textit{Besicovitch set}, is a compact set $K\subset \R^n$ that contains a unit line segment in every direction. Due to work of Besicovitch, it is known that there exists Kakeya sets of Lebesgue measure zero for $n\geq 2$. However, the Kakeya conjecture suggests that every Kakeya set $K\subset \R^n$ has $\dim K =n$, where here and throughout the paper $\dim$ refers to Hausdorff dimension. In $\R^2$, the conjecture was resolved \cite{Davies_1971,cordobakakeya}, and the problem remains open for $n\geq 3$ (though much progress has been made, see \cite{Wolff99, Katz2002} for surveys of progress on the Kakeya conjecture).

Analogously, the Furstenberg set problem (associated to lines) asks how small the Hausdorff dimension of an $(s,t)$-Furstenberg set can be. 

\begin{define}[$(s,t)$-Furstenberg sets]
Let $s\in (0,1]$ and let $t\in (0,2(n-1)]$. We say a Borel set $F\subset \R^n$ is an $(s,t)$\textit{-Furstenberg set} associated to lines, or simply an $(s,t)$\textit{-Furstenberg set}, if there exists a non-empty set of (affine) lines $\mathcal L$ such that $\dim \mathcal L \geq t$ and $\dim (F\cap \ell) \geq s$ for all $\ell \in \mathcal L$.
\end{define}

\begin{rem}
Note that the set of affine $k$-flats in $\R^n$ (denoted $\mathcal A(n,k)$ for the affine Grassmannian) is a metric space with a corresponding notion of Hausdorff dimension. We postpone discussion of this until Section \ref{sec:preliminaries}. For our purposes, we will only study measurable Furstenberg sets.
\end{rem}

\noindent Hence, a Kakeya set $K\subset \R^n$ is a $(1,n-1)$-Furstenberg set, as there exists a family of lines $\mathcal L$ with $\dim \mathcal L \geq n-1$ with $\dim (K\cap \ell) = 1$; simply take the set of lines to be lines containing line segments within $K$. However, note that the associated set of lines for an arbitrary $(s,t)$-Furstenberg set \textit{need not} be spread out in different directions (e.g. you can have a $t$-dimensional set of parallel lines).

Progress towards finding sharp bounds for the Furstenberg set problem has rapidly developed in the 25 years since Wolff initially wrote about the problem \cite{Wolff99}. Detailed accounts of the development of this problem can be found within the work of Orponen--Shmerkin \cite{orponenshmerkinabc} and Ren--Wang \cite{renwang} who collectively resolved the Furstenberg set problem in the plane last year. They show that given an $(s,t)$-Furstenberg set $F\subset \R^2$,
\[
\dim F \geq \min\{s + t, \frac{3s+t}{2}, s+1\}.
\]
It is known that this lower bound is sharp. Notably, Wolff showed the a sharp example that obtains the lower bound of $\frac{3s+t}{2}$ corresponds to a sharp example in the discrete setting for the Szemer\'edi--Trotter theorem regarding point-line incidences. In fact, both the Kakeya conjecture and the Furstenberg set problem have seen a lot of development in the discrete setting, especially over finite fields.

Let $\mathbb{F}_q$ be a finite field with $q$ elements. In the finite field setting, a Kakeya set is a set $K\subset \mathbb{F}_q^n$ that contains a line in every direction. I.e., for every $a\in \mathbb{F}_q^n \setminus \{0\}$, there exists a $b\in \mathbb{F}_q^n$ such that $\{at + b : t\in \mathbb{F}_q\} \subset K$. Akin to the Kakeya conjecture over $\R^n$, it was conjectured that Kakeya sets in $\mathbb{F}_q^n$ should also be correspondingly large (e.g. of cardinality $\approx |\mathbb{F}_q^n| = q^n$ up to some multiplicative constant). However, in contrast to the Euclidean setting, Dvir \textit{resolved} this conjecture in finite fields in all dimensions using the polynomial method, showing that every Kakeya set $K\subset \mathbb{F}_q^n$ has cardinality $\gtrsim_n q^n$ \cite{Dvir08}. Here and throughout the paper, $A\gtrsim_\chi B$ denotes the inequality $A\geq C B$ for some postitive constant $C$ that depends on $\chi.$ The constant for the Kakeya problem over finite fields has been improved since Dvir's work, see e.g. \cite{DKSS13,BukhChao21}. 

It is thus natural to wonder if there is a finite field analogue of a Furstenberg set. Let $s\in (0,1]$ and let $t\in (0,2(n-1)]$. In the finite field setting, we say $F\subset \mathbb{F}_q^n$ is an $(s,t)$-Furstenberg set if there exists a set of (affine) lines $\mathcal L$ such that $|\mathcal L|\gtrsim q^t$ and $|F\cap \ell| \gtrsim q^s$. Based on the work of Orponen--Shmerkin and Ren--Wang, it is then natural to conjecture that if $q$ is prime, an $(s,t)$-Furstenberg set $F\subset \mathbb{F}_q^2$ satisfies
\[
|F| \gtrsim q^{\min\{s+t,\frac{3s+t}{2}, s+ 1\}-\epsilon}
\]
for all $\epsilon>0.$ Firstly, note that this conjecture is false if $q$ is not prime due (more or less) to the existence of subfields \cite[Remark 2.1]{Wolff99}. Secondly, note that while the $(s,t)$-Furstenberg set problem is resolved over $\R^2$, the above conjecture is not known in $\mathbb{F}_q^2$. We discuss progress on the $(s,t)$-Furstenberg set problem over finite fields in the following subsections.

\subsection{The \texorpdfstring{$(n,k)$}{(n,k)}-Besicovitch Conjecture and Furstenberg Sets Associated to Flats}

It is then natural to ask what happens when we replace lines with higher dimensional flats.

A higher dimensional analogue of Kakeya sets are known as $(n,k)$-Besicovitch sets, where one replaces line segments in all directions with $k$-dimensional unit discs in all ``directions.'' More precisely, an $(n,k)$\textit{-Besicovitch set} is a compact set $K\subset\R^n$ such that for each $k$-dimensional subspace $P$ in $\R^n$, there exists a translation vector $a_P$ such that $(P\cap B^n(0,1/2))+a_P$ is a subset of $K$. Similar to the Kakeya conjecture, one may ask how small an $(n,k)$-Besicovitch set can be. 

As discussed previously, work of Besicovitch shows that there are measure zero $(n,1)$-Besicovitch sets in all dimensions, however it is conjectured that this doesn't hold for higher dimensional flats. In particular, the $(n,k)$\textit{-Besicovitch conjecture} suggests that there are \textit{no} measure zero $(n,k)$-Besicovitch sets for $k\geq 2$. In the Euclidean setting, work of Bourgain \cite{Bourgain1991BesicovitchTM} and Oberlin \cite{Oberlin2005BoundsFK} shows that the $(n,k)$-Besicovitch conjecture holds for $k >k_0$ where $k_0 := k_0(n)$ is the smallest positive integer satisfying $n\leq \frac{7}{3} 2^{k_0-2} + k_{0}$. 
We will make use of Bourgain and Oberlin's work to obtain our main result. It is worth noting that analogous statements of the $(n,k)$-Besicovitch set conjecture have been explored in non-Euclidean settings, including the work of the second author (see e.g. \cite{EOT10, Dummit2013-oj, Lewko2019,  dhar23nkbesicovitch}). Note that the $(n,k)$-Besicovitch set conjecture is clearly strictly stronger than the statement: every $(n,k)$-Besicovitch set $K\subset \R^n$ satisfies $\dim K = n$ (though both statements are currently unknown over $\R^n$ for small $k$).

Analogously, the Furstenberg set problem associated to $k$-flats asks how small the Hausdorff dimension of an $(s,t;k)$-Furstenberg set can be. 

\begin{define}[$(s,t;k)$-Furstenberg sets]
Let $k\in [1,n-1]$ be an integer, $s\in (0,k]$, and $t\in (0,(k+1)(n-k)]$. We say a Borel set $F\subset \R^n$ is an $(s,t;k)$\textit{-Furstenberg set} if there exists a non-empty set of (affine) $k$-dimensional flats $\mathcal P$ such that $\dim \mathcal P \geq t$ and $\dim (F\cap P)\geq s$ for all $P\in \mathcal P.$
\end{define}

\noindent In this way, $(s,t)$-Furstenberg sets are $(s,t;1)$-Furstenberg sets, and $(n,k)$-Besicovitch sets are $(k,k(n-k);k)$-Furstenberg sets. However, again note that the family of $k$-flats given by an arbitrary $(s,t;k)$-Furstenberg set \textit{need not} be spread out in different directions.

Finding sharp lower bounds for the Furstenberg set problem associated to $k$-flats is of great interest. Some of the first results of this kind were obtained by Oberlin \cite{Oberlin2007} and Falconer and Mattila \cite{Falconer2016}. In the work of Oberlin, instead of assuming that $\dim(F\cap P)\geq s$, he assumes $P\subset F$ for all $P \in \mathcal P$. Under this hypothesis, Oberlin is able to obtain that 
\begin{equation}\label{eqn:OberlinFalconerMattila}
\begin{cases}
    \dim F \geq 2k - k(n-k)+t, & t\leq (k+1)(n-k) - k \\
    F \text{ has positive Lebesgue measure}, &t> (k+1)(n-k)-k
\end{cases}.
\end{equation}
In the work of Falconer and Mattila, instead of assuming $P\subset F$, they assume that a positive-measure subset of $P$ is contained in $F$ and obtain \eqref{eqn:OberlinFalconerMattila}. Since then, H\'era--Keleti--M\'ath\'e \cite{herakeletimathe} weakened the positive-measure hypothesis to a dimensional one. In particular, they show an $(s,t;k)$-Furstenberg set $F\subset \R^n$ satisfies $$\dim F \geq 2s + \min\{t,1\}-k.$$ Subsequent work of H\'era \cite{heraFurstenberg} shows that an $(s,t;k)$-Furstenberg set $F\subset \R^n$ satisfies
\[
\dim F\geq s + \frac{t - (k-\lceil s\rceil)(n-k)}{\lceil s\rceil +1}.
\]
We will also make use of H\'era's work to obtain our main result. 

More work has been done in the setting of Furstenberg sets of hyperplanes (i.e. when $k=n-1$). Work of Dąbrowski--Orponen--Villa \cite{dabrowski2021integrability} shows that an $(s,t;n-1)$-Furstenberg set, $F\subset \R^n$, with $t\in (1,n]$ satisfies 
\[
\dim F\geq 2s + 2-n - \frac{(t-1)(n-1-s)}{n-1}.
\]
An important input into this result involves point-hyperplane duality, which we will utilize and discuss further in Section \ref{sec:appendix}.

As far as the authors know, the only paper discussing (specifically) $(s,t; k)$-Furstenberg sets over finite fields is work of Gan \cite{gan2024furstenberg}. Roughly, Gan shows that assuming one has good lower bounds for $(s,t;1)$-Furstenberg sets, then one has good lower bounds for $(s,t;k)$-Furstenberg sets. That said, much more work has been done regarding Furstenberg sets over finite fields when additional assumptions are places on the set of associated $k$-flats. One such assumption leads us to the main object of study in this paper: \textit{spread Furstenberg sets}.

\subsection{Spread Furstenberg Sets}\label{subsec:1.3}

As we have alluded to, a key difference between the Furstenberg set problem (associated to affine $k$-flats) and the statements of the Kakeya conjecture and the $(n,k)$-Besicovitch conjecture is that, in the conjectures, the $k$-flats lie in different ``directions.'' This difference is precisely the focus of our work on \textit{spread Furstenberg sets} in this paper.

\begin{define}[$(s,t;k)$-spread Furstenberg sets]
    Let $k\in [1,n-1]$ be an integer, $s\in (0,k]$, and $t\in (0,k(n-k)]$. We say a Borel set $F\subset \R^n$ is an $(s,t;k)$-\textit{spread Furstenberg set} if there exists a non-empty set of $k$-dimensional \textit{subspaces} $\mathcal P$ such that $\dim \mathcal P \geq t$, and for all $P \in \mathcal P$, there exists a translation vector $a_P \in \R^n$ such that $\dim (F\cap (P+a_P)) \geq s$.
\end{define}

Notice that $(s,t;k)$-spread Furstenberg sets are also $(s,t;k)$-Furstenberg sets by embedding the translated planes $P + a_P$ into the space of affine $k$-flats. However, while this embedding certainly doesn't decrease the value of $t$, it is possible for the embedding to increase the value of $t$ (in the same way that a graph of a function $f:\R\to \R$ embedded in $\R^2$ can have Hausdorff dimension larger than 1).

With this definition, it is thus natural to ask how small the Hausdorff dimension of a $(s,t;k)$-spread Furstenberg set can be. This variant has been especially explored in the finite field setting, though notably less so in the Euclidean setting. That said, in fact, this is the variant of the Furstenberg set problem (associated to lines) that was initially studied by Wolff \cite{Wolff99}.

In the finite field setting, we say a set $F\subset \mathbb{F}_q^n$ is an $(s,t;k)$-spread Furstenberg set if there exists a set of $k$-dimensional subspaces $\mathcal P$ such that $|\mathcal P|\gtrsim q^t$, and for all $P\in \mathcal P$ there exists a vector $a_P\in \mathbb{F}_q^n$ such that $|F\cap (P+a_P)|\gtrsim q^s$. Here, again, $s\in (0,k]$ and $t\in (0,k(n-k)]$. Note that most work towards this problem involves a full-dimensional set of subspaces, i.e. $t = k(n-k)$.

Let's begin with the history of the spread Furstenberg set problem associated to lines, i.e. $k=1$, over finite fields.
Applying the polynomial method approach that Dvir used to solve the Kakeya problem over finite fields, one can show that an $(s,n-1;1)$-spread Furstenberg set $F\subset \mathbb{F}_q^n$ satisfies 
\[
|F|\gtrsim q^{ns}.
\]
Additionally, a counting argument (counting pairs of points that lie on the same line) yields the lowerbound of $q^{s + \frac{n-1}{2}}$. However, one can obtain better bounds if $q$ is prime. Work of Zhang \cite{Zhang2015} shows that given $q$ is prime, then a $(\frac{1}{2},n-1;1)$-spread Furstenberg set $F\subset \mathbb{F}_q^n$ satisfies
\[
|F|\gtrsim q^{\frac{n}{2} + \Omega(1/n^2)}.
\]
He furthermore shows that there are $(s,n-1;1)$-spread Furstenberg sets $F\subset \mathbb{F}_q^n$ such that 
\[
|F| \lesssim q^{\frac{n+1}{2} s + \frac{n-1}{2}},
\]
and conjectures that this bound is sharp when $q$ is prime. Note that over $\R^n$ similar conjectures have been recently made with additional hypotheses; see for instance \cite[Conjecture 15]{wangwu}.

For larger values of $k$, much has been done (see \cite{EOT10, KLSS2011, EE16}), though for our purposes we focus on the work of the second author, Dvir, and Lund \cite{DDL-2}. In particular, using the bounds from the Kakeya problem (when $k=1$), Dhar, Dvir, and Lund (roughly) show that for $s\geq 1$, $(s,k(n-k);k)$-spread Furstenberg sets $F\subset \mathbb{F}_q^n$ satisfy
\begin{equation}\label{eqn:DDLBound}
|F|\gtrsim q^{n-k+s}.
\end{equation}
Notably, a pigeonholing argument shows that any set of $q^{n-k+s}$ points in $\mathbb{F}_q^n$ is a $(s,k(n-k);k)$-spread Furstenberg set (and thus \eqref{eqn:DDLBound} is sharp). Furthermore, note that $(s,t;k)$-spread Furstenberg sets over finite fields have also been studied for $t< k(n-k)$; see for instance \cite{Dhar_linear}.

In this paper, we follow the methodology of Dhar--Dvir--Lund to obtain an analogous bound over $\R^n$ for sufficiently large $k$-flats.

\subsection{Main Results}

Our main result is the following. 

\begin{thm}\label{thm:ourmainresult}
Let $k_0$ be the smallest positive integer satisfying satisfying $\frac{7}{3} 2^{k_0 -2} + k_0\geq n$. Then, for all $k \geq  k_0 +1$ and $s>k_0$, if $F\subset \R^n$ is a $(s,t;k)$-spread Furstenberg set, then 
\[
\dim F \geq n-k + s - \frac{k(n-k) -t}{\lceil s\rceil - k_0 +1}.
\]
\end{thm}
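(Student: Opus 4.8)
The plan is to follow the Dhar--Dvir--Lund slicing philosophy: adapt H\'era's slicing argument for Furstenberg sets to the spread setting, and feed in the Euclidean $(n,k)$-Besicovitch theorem of Bourgain--Oberlin at the base of the induction (where H\'era uses a softer input). This substitution is what allows $k_0$ to enter the denominator and is what makes the bound improve on H\'era's estimate in the large-$t$ regime. Since Hausdorff dimension is only lower semicontinuous under the operations below, I would first \emph{discretize}: by dyadic pigeonholing, pass to a subfamily $\mathcal P_0\subseteq\mathcal P$ supporting a $(\delta,t)$-Frostman measure and to a single scale $\delta$ at which, for every $P\in\mathcal P_0$, the chosen translate $\Pi_P:=P+a_P$ satisfies $N_\delta(F\cap\Pi_P)\gtrsim\delta^{-s+\eta}$, where $N_\delta$ is the $\delta$-covering number and $\eta>0$ is a fixed small loss; everything below is run at scale $\delta$, letting $\eta\to0$ and $\delta\to0$ at the end.

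The engine is a \emph{slicing-within-planes} step. Fix $P\in\mathcal P_0$ and work inside $\Pi_P\cong\R^k$, where $F\cap\Pi_P$ is essentially $s$-dimensional. A discretized Marstrand/Fubini slicing argument yields a full-dimensional family $\mathcal V_P\subseteq\mathcal G(k,k-1)$ of codimension-one directions $V\subset P$, and for each a positive-dimensional set of translates $V+w\subset\Pi_P$, with $N_\delta\big(F\cap\Pi_P\cap(V+w)\big)\gtrsim\delta^{-(s-1)+\eta'}$. Unioning these $(k-1)$-planes over $P\in\mathcal P_0$ produces a family $\mathcal P_1\subseteq\mathcal G(n,k-1)$ for which $F$ is again a (discretized) spread Furstenberg set, now with $(k-1)$-subspaces and richness $s-1$. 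The Grassmannian bookkeeping to track is: $\{P\in\mathcal G(n,k):V\subset P\}$ has dimension $n-k$ and $\dim\mathcal G(k,k-1)=k-1$, so the incidence variety $\{(P,V)\}$ has dimension $\ge t+(k-1)$ and hence $\dim\mathcal P_1\ge t+(k-1)-(n-k)$; equivalently the deficiency $d:=k(n-k)-t$ satisfies $(k-1)\big(n-(k-1)\big)-\dim\mathcal P_1\le d$, so it does not increase, while $n-k+s$ is unchanged because $s$ and $n-k$ both drop by $1$.

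The base case of the resulting induction on $\lceil s\rceil$ is $\lceil s\rceil=k_0+1$, i.e.\ $s\in(k_0,k_0+1]$: here $k\ge k_0+1>k_0$, so the maximal Besicovitch-type part of the configuration is governed by Bourgain--Oberlin, while the sub-maximal remainder is absorbed via H\'era's bound $\dim F\ge s+\frac{t-(k-\lceil s\rceil)(n-k)}{\lceil s\rceil+1}$ applied to the embedded affine Furstenberg set; balancing these against the deficiency gives $\dim F\ge s+\frac{t-(k-2)(n-k)}{2}$. Each inductive step applies the slicing engine once, passing from $\lceil s\rceil=p-1$ to $\lceil s\rceil=p$ and, with the Grassmannian bookkeeping done tightly, increasing the effective denominator by exactly one; after $\lceil s\rceil-k_0-1$ steps one reaches, for general $s>k_0$, the estimate $\dim F\ge s+\frac{t-(k-D)(n-k)}{D}$ with $D=\lceil s\rceil-k_0+1$. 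Since $s+\frac{t-(k-D)(n-k)}{D}$ equals $n-k+s-\frac{k(n-k)-t}{D}$, this is exactly the claimed bound.

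The main obstacle is twofold. First, making the slicing step genuinely rigorous over $\R$: Marstrand slicing controls only \emph{almost every} slice, is sensitive to the gap between Hausdorff dimension and Hausdorff content, and lives at discretized scales, so one must verify that $\mathcal P_1$ is honestly measurable, honestly of the claimed dimension (not merely for generic choices of the slicing parameters), and compatible with the uniformity set up in the discretization. Second, and more seriously, obtaining the sharp denominator: a naive slice-to-the-base argument only produces the weaker denominator $2$, so one must instead hand Bourgain--Oberlin the \emph{largest} maximal sub-structure available at each level — which forces one to retain the entire flag of nested rich slices, not merely its bottom element — and ensure no additive slack is lost in the flag-manifold dimension counts, so that the per-step gain of exactly one in the denominator is genuine. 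Identifying the correct maximal part and showing its complement is harmless is where the real work sits, and where the full strength of Bourgain--Oberlin, rather than a soft Furstenberg-type input, is indispensable.
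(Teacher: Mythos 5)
Your proposal correctly identifies the three main ingredients—Marstrand slicing, the Bourgain--Oberlin maximal function bound, and H\'era's affine Furstenberg estimate—but the organizing scheme is wrong, and you yourself flag the symptom: ``a naive slice-to-the-base argument only produces the weaker denominator $2$.'' This is not a technicality you can patch by ``retaining the entire flag of nested rich slices''; it is the sign that the induction on $\lceil s\rceil$ is in the wrong direction. Your slicing engine sends an $(s,t;k)$-configuration to an $(s-1,t';k-1)$-configuration while preserving the \emph{projective} deficiency $d=k(n-k)-t$, and then invokes a result at a \emph{smaller} $\lceil s\rceil$. But the claimed denominator $D=\lceil s\rceil-k_0+1$ decreases as $\lceil s\rceil$ decreases, so applying the inductive hypothesis one level down yields $n-k+s-d/(D-1)$, which is \emph{weaker} than what you are trying to prove. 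Running it all the way to the base case $\lceil s\rceil=k_0+1$ and plugging in any H\'era-type bound there returns exactly $n-k+s-d/2$ with $D=2$, no matter how carefully the Grassmannian bookkeeping is done; there is no per-step mechanism in your framework by which ``the effective denominator increases by one,'' because the slicing step as you have written it is a pure incidence-variety dimension count that does not touch any exponent.

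The paper avoids this entirely by making a \emph{single} jump: it slices from $k$-flats to $(k-k_0)$-flats in one step (not one codimension at a time), forms the set $\cS\subseteq\mathcal A(n,k-k_0)$ of affine $(k-k_0)$-flats meeting $F$ in dimension $\ge s-k_0$, and then applies H\'era \emph{once} to the $(s-k_0,\alpha;k-k_0)$-affine Furstenberg set $\cS$. The denominator $\lceil s-k_0\rceil+1=\lceil s\rceil-k_0+1$ is simply H\'era's denominator at the reduced richness; it is not built up inductively. The role of Bourgain--Oberlin is also different from what you describe: it is not a ``base-case Besicovitch input'' but a maximal-function bound applied, for each direction $U\in\mathcal G(n,k-k_0)$, to the $k_0$-slabs $V/U+a_V\subset U^\perp\cong\R^{n-k+k_0}$ that carry rich slices. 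This is exactly what upgrades the dimension of $\cS$ from the naive incidence count $\alpha-(n-k)$ (which is what your projective-deficiency bookkeeping would yield after tracking the \emph{affine} dimension of the $(k-1)$-flats correctly) to the sharp value $\alpha=(k-k_0+1)(n-k+k_0)-k(n-k)+t$. In short: you must track the affine Grassmannian dimension of the sliced family, not the projective one, and Bourgain--Oberlin is what closes the gap of $n-k$ between the incidence count and the needed $\alpha$; neither of these appears in your inductive framework, and the induction direction is incompatible with the bound you want.
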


A few notes about this result before we move on. Firstly, we need take $k$ sufficently large (i.e. larger than $k_0$) so that we can apply the maximal function bounds of Bourgain \cite{Bourgain1991BesicovitchTM} and Oberlin \cite{Oberlin2005BoundsFK} for the $(n,k)$-Besicovitch conjecture. Heuristically, if the Kakeya maximal conjecture was known, one should be able to take $k_0 = 1.$ Secondly, just as in the discrete setting, when $t = k(n-k)$, this bound is sharp by the Marstrand slicing theorem (another key ingredient in our approach). This is discussed further in Section \ref{sec:preliminaries}. Lastly, a key ingredient in the proof of Theorem \ref{thm:ourmainresult} is the bound for (not necessarily spread) $(s,t;k)$-Furstenberg sets obtained by H\'era \cite{heraFurstenberg}. 

Via our methodology, we are able to apply maximal function bounds of Cordoba \cite{cordobakakeya} to obtain the following for spread-Furstenberg sets associated to \textit{hyperplanes}.

\begin{thm}\label{thm:mainhyperplane}
    For $n\geq 3$, $s\in (1,n-1]$, and $t\in (0,n-1]$, any $(s,t;n-1)$-spread Furstenberg set $F\subset \mathbb{R}^n$ satisfies
    \[
    \dim F \geq 1 + s - \frac{n-1-t}{\lceil s\rceil}.
    \]
\end{thm}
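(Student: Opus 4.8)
The plan is to reduce the statement to H\'era's bound \cite{heraFurstenberg} for ordinary Furstenberg sets, by realizing an $(s,t;n-1)$-spread Furstenberg set as an ordinary Furstenberg set associated to $(n-2)$-flats, at the cost of one unit of intersection dimension but with a gain of $n-1$ units of flat dimension. Concretely, I would first establish the following reduction: if $F\subset\R^n$ is an $(s,t;n-1)$-spread Furstenberg set with $s>1$ and $t\le n-1$, then $F$ is an ordinary $(s-1,\,t+(n-1);\,n-2)$-Furstenberg set in $\R^n$. Granting this, H\'era's bound applied in ambient dimension $n$ with flat-dimension $k=n-2$, intersection parameter $s-1$, and flat parameter $t+(n-1)$ --- using $\lceil s-1\rceil=\lceil s\rceil-1$ and $n-(n-2)=2$ --- gives
\[
\dim F\ \ge\ (s-1)+\frac{(t+n-1)-\bigl((n-1)-\lceil s\rceil\bigr)\cdot 2}{\lceil s\rceil}\ =\ (s-1)+2+\frac{t-n+1}{\lceil s\rceil}\ =\ 1+s-\frac{(n-1)-t}{\lceil s\rceil},
\]
which is exactly the asserted bound. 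The hypotheses $1<s\le n-1$ and $0<t\le n-1$ are precisely what guarantee $s-1\in(0,n-2]$ and $t+(n-1)\le\bigl((n-2)+1\bigr)\cdot 2$, so that H\'era's theorem applies to the new data; in particular this is where $s>1$ is used.

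To obtain the reduction, fix a family $\mathcal P\subset\mathcal G(n,n-1)$ with $\dim\mathcal P\ge t$ and translates $a_P$ with $\dim\bigl(F\cap(P+a_P)\bigr)\ge s$. For each $P$, the set $E_P:=F\cap(P+a_P)$ lies in the affine hyperplane $P+a_P\cong\R^{n-1}$ and has dimension $>1$. By Marstrand's slicing theorem applied inside $P+a_P$ (after passing to a compact subset of $E_P$ of positive, finite $\mathcal H^{s-\epsilon}$-measure), for almost every hyperplane direction of $P+a_P$ and, for each such direction, a positive-measure set of translates, the corresponding affine $(n-2)$-flat $Q$ satisfies $\dim(E_P\cap Q)\ge s-1-\epsilon$; the collection $\mathcal Q_P$ of such $Q$ has full dimension $n-1$ in $\mathcal A(n-1,n-2)$. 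Viewing each $\mathcal Q_P$ inside $\mathcal A(n,n-2)$ and setting $\mathcal Q:=\bigcup_{P\in\mathcal P}\mathcal Q_P$, every member of $\mathcal Q$ meets $F$ in dimension $\ge s-1-\epsilon$; thus the reduction comes down to the lower bound $\dim\mathcal Q\ge t+(n-1)$, after which one lets $\epsilon\to 0$.

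The crux --- and what I expect to be the only genuine obstacle --- is precisely this dimension estimate, and this is where the methodology of Theorem \ref{thm:ourmainresult} and Cordoba's maximal bound \cite{cordobakakeya} enter. The difficulty is that $P\mapsto a_P$ is only measurable, so the parametrization of $\mathcal Q$ by $\mathcal P$ need not be Lipschitz and a soft fibered-set lower bound for $\dim\bigcup_P\mathcal Q_P$ is unavailable; moreover, in a $\delta$-discretization two plates $(P+a_P)^\delta$ and $(P'+a_{P'})^\delta$ with distinct directions can contain a common $\delta$-neighborhood of an $(n-2)$-flat $Q$, and this can occur for as many as $\delta^{-1}$ directions $P$ --- the pencil of hyperplanes through the direction of $Q$ --- which naively threatens a loss of a full dimension and hence a strictly weaker final bound. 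To recover it, I would discretize at a single, well-chosen scale and note that any such clustering is, in the two-dimensional quotient of $\R^n$ by the direction of $Q$, a fan of $\delta$-tubes through a point: a planar Kakeya configuration. Cordoba's estimate, applied fibrewise over the $(n-2)$-dimensional common flat and summed appropriately, controls the multiplicity of this clustering up to a factor $\delta^{-\epsilon}$ and yields $\dim\mathcal Q\ge t+(n-1)$; equivalently, this is the step in which Cordoba's planar bound is upgraded, by foliation, to the hyperplane Besicovitch maximal estimate in $\R^n$, which holds in every dimension --- the reason the role of $k_0$ in Theorem \ref{thm:ourmainresult} can be taken to be $1$ once $k=n-1$. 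The remaining ingredients --- making Marstrand's slicing uniform over $P\in\mathcal P$ by a Fubini/Frostman argument, the usual $\delta^{-\epsilon}$ bookkeeping and reduction to a single scale, and the measurability of $\mathcal Q$ --- are routine.
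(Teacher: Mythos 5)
Your proposal follows essentially the same route as the paper: it runs the three-step machinery of Theorem \ref{thm:ourmainresult} (Marstrand slicing, a Kakeya maximal step, then H\'era's bound) with $k_0=1$ and Cordoba's planar Kakeya maximal estimate in place of the Bourgain--Oberlin bounds — which is exactly how the paper obtains Corollary \ref{cor:hyperplaneintro} from the general version of Theorem \ref{thm-main}. Your parameter bookkeeping ($(s-1,\,t+n-1;\,n-2)$, then $\lceil s-1\rceil=\lceil s\rceil-1$ and $n-(n-2)=2$ in H\'era) matches the paper's $\alpha=(k-k_0+1)(n-k+k_0)-k(n-k)+t=(n-1)+t$, and you correctly identify the single nontrivial step ($\dim\mathcal{Q}\ge t+n-1$) and the tool that delivers it.
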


Surprisingly, Theorem \ref{thm:mainhyperplane} gives new bounds for $(s,t;n-1)$-Furstenberg sets by the following Proposition.

\begin{prop}\label{prop:projectivetrans}
    Let $F \subset \R^n$ be an $(s,t;n-1)$-Furstenberg set. Then, there exists a projective transformation $\phi$ such that $\phi(F)$ is an $(s,\min\{t,n-1\};n-1)$-spread Furstenberg set.
\end{prop}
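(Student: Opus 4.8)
The plan is to exploit the classical fact that a projective transformation of $\R\P^n$ sends hyperplanes to hyperplanes, and to choose one that maps the $t$-dimensional family $\mathcal P \subset \mathcal A(n,n-1)$ of affine hyperplanes associated to $F$ to a family of hyperplanes all passing through a common point (say the origin, after translating). Concretely, I would work in the projective model: identify $\R^n$ with an affine chart of $\R\P^n$, and recall that the space of affine hyperplanes in $\R^n$ naturally embeds into the dual projective space $(\R\P^n)^*$, with the ``hyperplanes through the origin'' corresponding to a fixed hyperplane $H_\infty^* \subset (\R\P^n)^*$. First I would pick a single point $p_0 \in \R\P^n$ lying on ``many'' of the hyperplanes in $\mathcal P$ — or more robustly, simply pick \emph{any} point $p_0$ not on a generic hyperplane and send it to infinity: a projective transformation $\phi$ of $\R\P^n$ whose inverse sends the hyperplane at infinity $\{x_0 = 0\}$ to some fixed hyperplane $P_0$ avoiding a positive-dimensional chunk of the configuration. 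The cleaner route: choose $\phi$ so that a distinguished point common to a full-measure (in the relevant Hausdorff sense) subfamily of $\mathcal P$ is sent to infinity, so that the images $\phi(P)$ become parallel hyperplanes, i.e.\ translates of a fixed subspace — which is exactly the spread condition with $k=n-1$.

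The key steps, in order, are: (1) set up the duality between affine hyperplanes in $\R^n$ and points of $(\R\P^n)^*$, and note that the Hausdorff dimension of a family of hyperplanes, measured in $\mathcal A(n,n-1)$, agrees up to bi-Lipschitz-on-compacts equivalence with the Hausdorff dimension of the corresponding point set in $(\R\P^n)^*$ (away from the hyperplane at infinity); (2) observe that projective transformations of $\R\P^n$ act on $(\R\P^n)^*$ as projective transformations, hence are locally bi-Lipschitz on compact subsets avoiding the relevant hyperplanes, so they preserve Hausdorff dimension of hyperplane-families and of subsets of $\R^n$; (3) argue that we may restrict to a subfamily $\mathcal P' \subset \mathcal P$ with $\dim \mathcal P' \geq \min\{t, n-1\}$ all of whose hyperplanes pass through a fixed point $p_0$ — this is where the bound $\min\{t,n-1\}$ enters: the family of \emph{all} hyperplanes through a fixed point is $(n-1)$-dimensional, so if $t > n-1$ we cannot keep more than dimension $n-1$, while if $t \le n-1$ a pigeonholing/fibering argument over the $1$-parameter pencils lets us retain essentially all of it; (4) choose the projective transformation $\phi$ sending $p_0$ to a point at infinity, so that every $P \in \mathcal P'$ maps to an affine hyperplane $\phi(P)$ with a prescribed direction, i.e.\ $\phi(P) = P_\infty + a_{\phi(P)}$ for a single subspace $P_\infty \in \mathcal G(n,n-1)$ and some translate — wait, one must be slightly careful: sending a \emph{single} point to infinity makes all hyperplanes through that point parallel, but we want a \emph{$t$-dimensional family of distinct directions} for the spread condition, not parallel ones. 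So instead I would send a hyperplane's worth of ``finite'' data to infinity: pick $p_0$ on the configuration, and use $\phi$ so that the hyperplanes through $p_0$ become hyperplanes through a \emph{different} common point $q_0 = \phi(p_0)$, and then translate so $q_0$ is the origin; then each $\phi(P)$ is a genuine linear subspace $\phi(P) \in \mathcal G(n,n-1)$, the family $\{\phi(P)\}$ is $\min\{t,n-1\}$-dimensional inside $\mathcal G(n,n-1)$, and $\dim(\phi(F) \cap \phi(P)) = \dim(F \cap P) \ge s$ by step (2); (5) conclude that $\phi(F)$ is an $(s,\min\{t,n-1\};n-1)$-spread Furstenberg set.

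The main obstacle is step (3)–(4): arranging that, after one projective map, \emph{all} the hyperplanes in the retained subfamily genuinely pass through a single point so that their images are honest linear subspaces, while simultaneously not losing more than the unavoidable truncation of $t$ at $n-1$. The subtlety is that a generic $t$-dimensional family of affine hyperplanes does \emph{not} have a common point; the resolution is that we do not need the original family to — we only need, after fibering $\mathcal A(n,n-1) \cong (\R\P^n)^*$ appropriately, to find a $\min\{t,n-1\}$-dimensional subfamily lying in a hyperplane of $(\R\P^n)^*$ (equivalently, a pencil's-worth of hyperplanes through each point of an $(n-1)$-dimensional transversal), which follows from a slicing/Fubini-type bound for Hausdorff dimension: a set of dimension $t$ in an $n$-dimensional space meets some hyperplane in dimension $\ge \min\{t, n-1\}$ after a dimension-counting/co-area argument, and this hyperplane is exactly the dual of a point $p_0$. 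I expect steps (1), (2), (5) to be routine (standard facts about projective transformations being locally bi-Lipschitz and preserving Hausdorff dimension, e.g.\ as in the references on the affine Grassmannian in Section~\ref{sec:preliminaries}), and to spend the bulk of the argument making step (3)–(4) precise, including checking that the translation vectors $a_P$ from the original spread/Furstenberg data transform correctly and that measurability of $F$ is preserved under $\phi$.
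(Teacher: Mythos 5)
Your strategy of dualizing and then applying a projective change of variables is the same broad idea as the paper's proof, but the specific mechanism you propose in step (3) is wrong, and this is a genuine gap that cannot be patched along the lines you suggest.

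In step (3) you assert that you can pass to a subfamily $\mathcal P' \subset \mathcal P$ with $\dim \mathcal P' \geq \min\{t,n-1\}$ all of whose hyperplanes share a common point $p_0$. Dually, this is the claim that the $t$-dimensional point set $\mathbf{D}^\ast(\mathcal P) \subset \R^n$ admits a hyperplane $H$ (the dual of $p_0$) with $\dim\bigl(\mathbf{D}^\ast(\mathcal P) \cap H\bigr) \geq \min\{t,n-1\}$. This is false in general. Slicing theorems (e.g.\ Theorem \ref{thm:mattilaSlice}) give that a \emph{typical} hyperplane slice of a $t$-dimensional set has dimension about $t-1$, and there is no guarantee of an exceptional slice achieving dimension $\min\{t,n-1\}$: take $\mathbf{D}^\ast(\mathcal P)$ to be a $t$-dimensional smooth submanifold in general position (e.g.\ a round sphere when $t=n-1$); then \emph{every} hyperplane section has dimension $\leq t-1 < \min\{t,n-1\}$ when $t<n$. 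So there simply is no $p_0$ with the property you need, and the pigeonholing/fibering/co-area heuristic you gesture at in the final paragraph does not produce one.

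The deeper issue is that you are aiming for an unnecessarily strong normalization: you want the image hyperplanes to be genuine linear subspaces (i.e.\ all pass through the origin). But the definition of an $(s,t;k)$-spread Furstenberg set only requires that the family of \emph{directions} $p(\phi(\mathcal P)) \subset \mathcal G(n,n-1)$ have dimension $\geq \min\{t,n-1\}$; it does not require a common point, since each $P$ comes with its own translation vector $a_P$. In dual coordinates, the direction of the affine hyperplane $\mathbf{D}(x_1,\dots,x_n)$ is read off from the first $n-1$ coordinates $(x_1,\dots,x_{n-1})$, i.e.\ from a coordinate \emph{projection} of the dual point, not a hyperplane \emph{section}. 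The paper therefore invokes Marstrand's projection theorem (Theorem \ref{thm:marstrand}) rather than any slicing theorem: for $\gamma_{n,n-1}$-a.e.\ $U \in \mathcal G(n,n-1)$ one has $\dim \pi_U\bigl(\mathbf{D}^\ast(\mathcal P)\bigr) = \min\{t,n-1\}$, and one then chooses a projective transformation that sends $U$ to the hyperplane at infinity so that, after applying $\phi$, the direction map $p$ agrees up to a bi-Lipschitz change of variables with $\pi_U$. This keeps the full $\min\{t,n-1\}$ dimensions of directions without imposing a common point, and projections (unlike generic slices) do not drop dimension. If you replace your step (3)--(4) with this projection-theoretic argument, the rest of your outline (steps (1), (2), (5)) is fine and matches the paper.
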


As an immediately corollary, we obtain the following.

\begin{cor}\label{cor:hyperplaneintro}
    Let $n\geq 3$, $1<s$, and let $F\subset \R^n$ be an $(s,t;n-1)$-Furstenberg set. Then,
    \[
    \dim F\geq 1 + s - \frac{n-1 - \min\{t,n-1\}}{\lceil s\rceil}.
    \]
\end{cor}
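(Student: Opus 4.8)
The plan is to obtain Corollary~\ref{cor:hyperplaneintro} by simply composing the two preceding results. Let $F\subset\R^n$ be an $(s,t;n-1)$-Furstenberg set with $s>1$. Since the associated family consists of $(n-1)$-flats, the definition of an $(s,t;n-1)$-Furstenberg set forces $s\le n-1$, so $s\in(1,n-1]$; this is exactly the range of validity of the theorem on $(s,t;n-1)$-spread Furstenberg sets stated just above Proposition~\ref{prop:projectivetrans}.

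The first step is to apply Proposition~\ref{prop:projectivetrans}: it produces a projective transformation $\phi$ of $\R^n$ for which $\phi(F)$ is an $(s,\min\{t,n-1\};n-1)$-spread Furstenberg set. Since $\min\{t,n-1\}\in(0,n-1]$, the parameter pair $(s,\min\{t,n-1\})$ meets the hypotheses of that spread hyperplane theorem, so applying it to $\phi(F)$ gives
\[
\dim\phi(F)\ \ge\ 1+s-\frac{n-1-\min\{t,n-1\}}{\lceil s\rceil}.
\]
The second step is to transfer this bound back to $F$ via $\dim\phi(F)=\dim F$. Because $\phi(F)$ lies inside $\R^n$ as asserted by Proposition~\ref{prop:projectivetrans}, the set $F$ is disjoint from the pole hyperplane of $\phi$, and off that hyperplane $\phi$ is a smooth diffeomorphism, hence bi-Lipschitz on each compact subset; writing $F$ as a countable union of such compact subsets and invoking the countable stability of Hausdorff dimension then gives $\dim\phi(F)=\dim F$. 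Chaining the two statements proves the corollary.

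Since both ingredients are already established, I do not expect a genuine obstacle here --- this is exactly why the statement is billed as an immediate corollary. The only items to verify are bookkeeping ones: that the parameter ranges of Proposition~\ref{prop:projectivetrans} and of the spread hyperplane theorem are mutually compatible (they are, via $\min\{t,n-1\}\in(0,n-1]$ and $s\in(1,n-1]$), and that the projective change of coordinates leaves the Hausdorff dimension unchanged. If one wished to be maximally careful, the latter is the point to linger on, since one must confirm that none of the Hausdorff dimension of $F$ is concentrated on the pole hyperplane of $\phi$; but this is automatic as soon as one takes seriously that $\phi(F)\subset\R^n$, as in Proposition~\ref{prop:projectivetrans}.
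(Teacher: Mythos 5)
Your proof is correct and follows the same approach the paper intends: reduce to the spread case via Proposition~\ref{prop:projectivetrans}, apply the spread hyperplane bound, and transfer back using invariance of Hausdorff dimension under projective maps. In fact you are slightly more careful than the paper itself on one point --- the paper's stated proof cites Theorem~\ref{thm:ourmainresult}, which cannot apply here (it requires $k\ge k_0+1$ and $s>k_0$ with $k_0\ge 2$ for $n\ge 3$, excluding the range $s\in(1,2]$ and even $k=n-1$ when $n=3$); the correct ingredient is the unnumbered Cordoba-based theorem for $(s,t;n-1)$-spread Furstenberg sets, which is exactly the one you invoke.
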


\begin{rem}
    The above bound is stronger than H\'era's and Dąbrowski--Orponen--Villa's if $t\geq n-1$. If $t< n-1$, our bound is stronger than H\'era's if $n-1 \leq \lceil s\rceil + t$. On the other hand, for $1<t<n-1$, Dąbrowski--Orponen--Villa's bound converges to $n$ as $s\nearrow n-1$ while our estimate does not. In these two ways, Corollary \ref{cor:hyperplaneintro} is neither strictly stronger nor weaker than previously known results.
\end{rem}

\begin{proof}[Proof of Corollary \ref{cor:hyperplaneintro}]
This follows immediately by Theorem \ref{thm:ourmainresult} and Proposition \ref{prop:projectivetrans}, noting that projective transformations preserve Hausdorff dimension (i.e. $\dim F = \dim \phi(F)$).
\end{proof}

The proof of Proposition \ref{prop:projectivetrans} is contained in Section \ref{sec:appendix}. The proof follows via an application of point-hyperplane duality and Marstrand's projection theorem. Note that similar statements for arbitrary $k<n-1$ fails. In particular, given an $(s,t;k)$-Furstenberg set, it is possible that there exists a higher dimensional flat in which the associated set of $k$-flats lie, which implies there isn't a projective transformation $\phi$ satisfying the above condition. That said, it would be interesting to know if there exists a non-concentration condition on $k$-flats that overcomes this barrier. More precisely, answers to the following (vague) question would be of interest:

\begin{question}
Let $F\subset \R^n$ be a $(s,t;k)$-Furstenberg set with an associated set of $k$-flats $\mathcal P$. Is there a non-concentration assumption one can place on $\mathcal P$ so that there exists a projective transformation $\phi$ such that $\phi(F)$ is a $(s, \min\{t,k(n-k)\};k)$-spread Furstenberg set? Or, more generally, such that $\phi(F)$ is a $(s,\min\{t, k(m-k)\};k)$-spread Furstenberg set where $k+1 \leq m \leq n$?
\end{question}

\noindent We do not explore this further here.

\subsection{Paper Outline}

In Section \ref{sec:preliminaries}, we discuss how one defines the metrics (and relatedly the Hausdorff dimension and measures on) the set of $k$-dimensional subspaces in $\R^n$ (the Grassmannian) and the set of $k$-dimensional affine flats in $\R^n$ (the affine Grassmannian). We also state a few results from geometric measure theory that we will black box for this paper (in particular, Marstrand's slicing theorem, H\'era's bound for $(s,t;k)$-Furstenberg sets, and the Kakeya maximal bounds of Bourgain and Oberlin). In Section \ref{sec-overview}, we discuss the methodology for obtaining Theorem \ref{thm:ourmainresult} which is largely motivated by \cite{DDL-2}. As a warm up, we also prove a subcase of Theorem \ref{thm:ourmainresult} when $t = k(n-k)$. Then, in Section \ref{sec:mainresultproof}, we prove Theorem \ref{thm:ourmainresult} for the entire range of $t$. Lastly, in Section \ref{sec:appendix}, we prove that every $(s,t;n-1)$-Furstenberg set in $\R^n$ is an $(s,\min\{t,n-1\};n-1)$-spread Furstenberg set using point-hyperplane duality and Marstrand's projection theorem.

\vspace{1em}
\begin{sloppypar}
\noindent {\bf Acknowledgements.}
The authors would like to thank Larry Guth, Pablo Shmerkin, Josh Zahl, Zeev Dvir, and Simone Maletto for useful discussions. We would also like to thank Damian Dąbrowski for bringing our attention to the hyperplanar Furstenberg set bound of himself, Orponen and Villa in comparison with our Corollary \ref{cor:hyperplaneintro}.
\end{sloppypar}

\section{Preliminaries} \label{sec:preliminaries}

We spend this section carefully defining the notion of a metric on the set of $k$-dimensional subspaces and the set of affine $k$-flats in $\R^n.$ With this notion of a metric, we will also briefly discuss background regarding Hausdorff dimension. This discussion will include both classical results in this area (that will allow us to make standard reductions to our proof), as well as the important results of H\'era, Bourgain, and Oberlin that we noted in the introduction.

Let $1\leq k \leq n-1$ be an integer. Then, we denote the set of $k$-dimensional \textit{subspaces} in $\R^n$, i.e. the \textit{Grassmannian}, as $\mathcal G(n,k)$. The metric on $\mathcal G(n,k)$ can be seen via the operator norm between orthogonal projections. In particular, given $U\in \mathcal G(n,k)$, let $\pi_U:\R^n \to U$ denote the orthogonal projection onto $U.$ Then, given $U,U'\in \mathcal G(n,k)$, we can define the distance between $U,U'$ as $d_{\mathcal G}(U,U') = \lVert \pi_U - \pi_{U'}\rVert_{\textrm{op}}$. One can check that this is in fact a metric on $\mathcal G(n,k)$. Furthermore, $\mathcal G(n,k)$ is in fact a compact smooth manifold of dimension $k(n-k)$. One quick way to see this is by noting $\mathcal G(n,k) \cong O(n) /(O(k)\times O(n-k))$. This in fact shows that $\mathcal G(n,k)$ is compact, and bestows on it a natural Haar measure which we denote $\gamma_{n,k}$ (see e.g. \cite[Chapter 3]{mattila_geometryofsets} for more). 

We similarly define the set of \textit{affine} $k$\textit{-flats} in $\R^n$, denoted $\mathcal A(n,k)$ for the \textit{affine Grassmannian}. An affine $k$-flat is a translation of a $k$-dimensional subspace $U \in \mathcal G(n,k)$. In particular, if $W \in \mathcal A(n,k)$ is a translation of the subspace $U$, we can uniquely identify $W$ by $U$ and the unique $a_W \in U^\perp$ such that $W = U + a_W$. With this identification, it is thus natural for the metric on $\mathcal A(n,k)$ to be defined as follows: given $W,W'\in \mathcal A(n,k)$ with $W = U + a_W$ and $W' = U' + a_{W'}$, we define 
\[
d_{\mathcal A}(W,W')= \lVert \pi_U - \pi_{U'} \rVert_{\textrm{op}} + |a_W - a_{W'}|.
\]
Furthermore, it is natural to define a product measure on $\mathcal A(n,k)$ as being $\lambda_{n,k} = \mathcal L^{n-k} \times \gamma_{n,k}$, where here $\mathcal L^{n-k}$ is the Lebesgue measure on $U^\perp \cong \R^{n-k}$. With this product measure, we can define how to integrate a function $f: \mathcal A(n,k) \to \mathbb{C}$. Namely, when defined, we have 
\[
\int_{\mathcal A(n,k)}f(W) \, d \lambda_{n,k}(W) := \int_{\mathcal G(n,k)} \int_{U^\perp} f(U + a) \, da \, d\gamma_{n,k}(U).
\]
One can check that $\dim \mathcal A(n,k) = (k+1) (n-k)$.

We will utilize the following notation regarding metric spaces.

\begin{notation}
Given a metric space $(X,d)$, we denote the ball of radius $\delta$ centered at a point $x\in X$ as $B_X(x,\delta)$ or simply $B(x,\delta)$ if the metric space is clear from context. We more generally denote the $\delta$-neighborhood of a subset $S\subset X$ as $B_X(S,\delta)$, or simply $B(S,\delta)$.
\end{notation}

From here, we note a few simple observations that we will need to obtain our main results. The first observation regards the measure of $\delta$-balls in the Grassmannian and the affine Grassmannian.

\begin{lem}\label{lem-BallVolGrass}
Given $\delta>0$, $U \in \mathcal G(n,k)$, and $W\in \mathcal A(n,k)$, we have
\begin{align*}
    \gamma_{n,k}(B_{\mathcal G(n,k)}(U,\delta)) &\sim \delta^{k(n-k)} \\
    \lambda_{n,k}(B_{\mathcal A(n,k)}(W,\delta)) &\sim \delta^{(k+1)(n-k)}.
\end{align*}
\end{lem}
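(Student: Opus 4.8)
The plan is to establish the volume estimate on the Grassmannian first, then deduce the affine version by Fubini-type splitting over the two factors of the product metric.

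For the Grassmannian estimate, I would use the identification $\mathcal G(n,k) \cong O(n)/(O(k)\times O(n-k))$ together with the fact that $\mathcal G(n,k)$ is a compact smooth manifold of dimension $k(n-k)$, and that $\gamma_{n,k}$ is (up to normalization) the Riemannian volume measure coming from the bi-invariant metric on $O(n)$. On any compact Riemannian manifold $M$ of dimension $m$, the volume of a metric ball $B(x,\delta)$ satisfies $\mathrm{vol}(B(x,\delta)) \sim_M \delta^m$ for all $\delta \le \mathrm{diam}(M)$: the upper bound $\lesssim \delta^m$ follows from comparison with Euclidean volume in a normal coordinate chart (bounded distortion of the exponential map on a compact manifold), and the lower bound $\gtrsim \delta^m$ follows the same way once one observes that $d_{\mathcal G}$ is bi-Lipschitz equivalent to the Riemannian distance — which holds because both are continuous metrics inducing the same topology on a compact space, hence comparable away from the diagonal, and comparable near the diagonal by a first-order computation (both have the same differential structure, as $d_{\mathcal G}(U,U') = \|\pi_U - \pi_{U'}\|_{\mathrm{op}}$ is, to first order, the norm of the off-diagonal block of the tangent vector, i.e. the canonical metric). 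The homogeneity of $\mathcal G(n,k)$ under the transitive isometric $O(n)$-action makes the implied constants uniform in $U$, so one only needs the estimate at a single base point. This gives $\gamma_{n,k}(B_{\mathcal G(n,k)}(U,\delta)) \sim \delta^{k(n-k)}$.

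For the affine Grassmannian, recall $W = U + a_W$ with $a_W \in U^\perp$, the metric is $d_{\mathcal A}(W,W') = \|\pi_U - \pi_{U'}\|_{\mathrm{op}} + |a_W - a_{W'}|$, and $\lambda_{n,k} = \mathcal L^{n-k}\times\gamma_{n,k}$. The only subtlety is that the second coordinate $a_W$ lives in the moving subspace $U^\perp$, so the product structure is really a fiber bundle; but since $d_{\mathcal A}$ controls $\|\pi_U - \pi_{U'}\|_{\mathrm{op}}$ by $\delta$, on the ball $B_{\mathcal A}(W,\delta)$ the subspace $U'$ stays within $O(\delta)$ of $U$, and one can identify $(U')^\perp$ with $U^\perp$ up to a map that is $O(\delta)$-close to the identity — in particular bi-Lipschitz with constants $1 + O(\delta)$ — so it distorts the relevant $(n-k)$-dimensional Lebesgue volume by a bounded factor. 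With that identification, $B_{\mathcal A}(W,\delta)$ is comparable to a product $B_{\mathcal G}(U, c\delta) \times \{a \in U^\perp : |a - a_W| < c\delta\}$ for suitable absolute constants, and hence
\[
\lambda_{n,k}(B_{\mathcal A(n,k)}(W,\delta)) \sim \gamma_{n,k}(B_{\mathcal G(n,k)}(U, \delta)) \cdot \mathcal L^{n-k}(B^{n-k}(0,\delta)) \sim \delta^{k(n-k)} \cdot \delta^{n-k} = \delta^{(k+1)(n-k)}.
\]

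The main obstacle is the careful comparison between $d_{\mathcal G}$ and the intrinsic Riemannian distance on the Grassmannian (and, relatedly, controlling the fiber identification $U^\perp \leftrightarrow (U')^\perp$ in the affine case): everything else is a routine application of the bounded-distortion principle for exponential maps on compact homogeneous spaces. One clean way to sidestep delicate differential-geometric bookkeeping is to cover $\mathcal G(n,k)$ by finitely many coordinate charts on which $d_{\mathcal G}$ is bi-Lipschitz to the Euclidean metric of $\R^{k(n-k)}$ (using local graph coordinates: near $U$, write nearby subspaces as graphs of linear maps $U \to U^\perp$, and check this parametrization is bi-Lipschitz for $d_{\mathcal G}$), deduce the ball-volume estimate on each chart for $\delta$ below a uniform threshold, and then handle $\delta$ above that threshold trivially by compactness (the measure is finite and bounded below on balls of bounded radius). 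Since all implied constants depend only on $n$ and $k$, this yields the stated $\sim$ with constants absorbing the finitely many charts.
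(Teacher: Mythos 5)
The paper gives no actual argument for this lemma --- it says ``This follows by the definition of the Haar measures, and we omit the proof here'' --- so your proposal is supplying details the authors chose not to. The approach you take (homogeneity of $\mathcal G(n,k)$ under the isometric $O(n)$-action, bi-Lipschitz comparison of $d_{\mathcal G}$ with the Riemannian distance or with Euclidean charts, then a fibered product argument for $\mathcal A(n,k)$) is the standard way to prove this and is essentially correct. The chart-based alternative you sketch at the end is the cleanest way to make the Grassmannian half rigorous.

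One point you should tighten in the affine half: you claim $B_{\mathcal A(n,k)}(W,\delta)$ is comparable to $B_{\mathcal G}(U,c\delta)\times\{a\in U^\perp:|a-a_W|<c\delta\}$ ``for suitable \emph{absolute} constants.'' That cannot be literally true uniformly in $W$. If $|a_W|=M$ is large and $d_{\mathcal G}(U,U')=\eta$, the distance from $a_W$ to $(U')^\perp$ can be on the order of $M\eta$ (cf.\ the estimate $|\pi_{U'}(a_W)|\le \|\pi_U-\pi_{U'}\|\,|a_W|$ used in Lemma~\ref{lem-grassDist}), so the fiber $\{a'\in(U')^\perp:|a'-a_W|<\delta-\eta\}$ is empty unless $\eta\lesssim \delta/M$. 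Thus only subspaces within $\sim\delta/M$ of $U$ contribute, and the lower bound degrades to $\gtrsim (\delta/M)^{k(n-k)}\cdot\delta^{n-k}$. The correct statement is that $\lambda_{n,k}(B_{\mathcal A(n,k)}(W,\delta))\sim_{|a_W|}\delta^{(k+1)(n-k)}$, i.e.\ with implied constant depending on (a bound for) $|a_W|$. This is harmless in the paper, since the lemma is applied after reducing to $F\subset B(0,1/8)$ so that all relevant flats pass through a fixed bounded ball, but you should state the dependence explicitly rather than claim absolute constants. The upper bound $\lesssim\delta^{(k+1)(n-k)}$, on the other hand, does hold uniformly in $W$, exactly by the ``fiber has measure $\le\delta^{n-k}$'' observation implicit in your argument.
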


\begin{proof}
This follows by the definition of the Haar measures, and we omit the proof here.
\end{proof}

The second observation notes how translating subspaces in $\mathcal G(n,k)$ relates to distances in $\mathcal A(n,k)$.

\begin{lem}\label{lem-grassDist}
Given $U,V\in \mathcal G(n,k)$ and $a\in U^\perp$ we have,
$$d_{\mathcal A}(U+a,V+a)\le (|a|+1)d_{\mathcal G}(U,V).$$
\end{lem}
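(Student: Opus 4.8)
The plan is to prove Lemma \ref{lem-grassDist} directly from the definitions of the two metrics. Recall that $d_{\mathcal A}(U+a, V+a) = \lVert \pi_U - \pi_V\rVert_{\textrm{op}} + |a - a'|$ where $a'$ is the unique vector in $V^\perp$ such that $V + a = V + a'$ as affine flats; in other words, $a' = \pi_{V^\perp}(a)$. So the first term is exactly $d_{\mathcal G}(U,V)$, and the entire content of the lemma is to control the translation discrepancy $|a - \pi_{V^\perp}(a)|$ in terms of $d_{\mathcal G}(U,V)$ and $|a|$.

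First I would observe that since $a \in U^\perp$, we have $\pi_U(a) = 0$, hence $|a - \pi_{V^\perp}(a)| = |\pi_V(a)| = |\pi_V(a) - \pi_U(a)| \leq \lVert \pi_V - \pi_U\rVert_{\textrm{op}} |a| = |a| \, d_{\mathcal G}(U,V)$. Combining, $d_{\mathcal A}(U+a, V+a) = d_{\mathcal G}(U,V) + |\pi_V(a)| \leq d_{\mathcal G}(U,V) + |a| d_{\mathcal G}(U,V) = (|a| + 1) d_{\mathcal G}(U,V)$, which is exactly the claimed bound.

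The one point that requires care is the representation of $V + a$ in the canonical form used to define $d_{\mathcal A}$: I would note explicitly that $V + a = V + \pi_{V^\perp}(a)$ because $a - \pi_{V^\perp}(a) = \pi_V(a) \in V$, and that $\pi_{V^\perp}(a)$ is the unique representative in $V^\perp$, so by definition $d_{\mathcal A}(U+a, V+a) = \lVert \pi_U - \pi_V \rVert_{\textrm{op}} + |a - \pi_{V^\perp}(a)|$. After that, the estimate above finishes the argument. There is no real obstacle here; the only thing to be mindful of is not conflating the affine-flat representative of $V+a$ (which lives in $V^\perp$) with the given vector $a$ (which lives in $U^\perp$), since these differ precisely by $\pi_V(a)$.
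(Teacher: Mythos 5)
Your proof is correct and follows essentially the same route as the paper's: identify the canonical $V^\perp$-representative of $V+a$ as $a - \pi_V(a) = \pi_{V^\perp}(a)$, note that $\pi_U(a)=0$ since $a\in U^\perp$, and bound $|\pi_V(a)| = |(\pi_V - \pi_U)(a)| \le \lVert \pi_U - \pi_V\rVert_{\mathrm{op}}\,|a|$.
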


\begin{proof}
This follows by noting that $V + a = V + (a-\pi_V(a))$ and $a-\pi_V(a) \in V^\perp$. Hence, for all $a \in U^\perp$
\[
|\pi_V(a)| = |a-(a-\pi_V(a))| = |(\pi_U - \pi_V)(a)| \leq \lVert \pi_U - \pi_V\rVert_{\mathrm{op}} \,|a|.
\]
Therefore, 
\[
d_{\mathcal A}(U + a, V+ a) = d_{\mathcal G}(U,V) + |a-(a-\pi_V(a))| \leq (|a|+1) d_{\mathcal G}(U,V).
\]
\end{proof}

Given $U,V\in \mathcal G(n,k)$, we let $R_{U,V}\in O(n)$ be the operator such that $V=R_{U,V}U$ and $\|I-R_{U,V}\|_{op}$ is minimized. This also defines a metric over the Grassmanian which is equivalent to the metric we defined earlier. In particular, we will need the following.

\begin{lem}\label{lem-rotatUV}
Given $U,V\in \mathcal G(n,k)$ and $b\in U$ we have the following,
$$|(I-R_{U,V})b|\lesssim_{n} |b|d_{\mathcal G}(U,V).$$
\end{lem}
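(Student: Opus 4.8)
The plan is to relate the rotation operator $R_{U,V}$ to the orthogonal projections $\pi_U, \pi_V$ whose operator-norm difference defines $d_{\mathcal G}(U,V)$. First I would recall the standard fact that $R_{U,V}$, the minimal rotation taking $U$ to $V$, acts as the identity on $(U+V)^\perp$ and acts as a rotation by the principal angles $\theta_1,\dots,\theta_k$ between $U$ and $V$ in the (at most $2k$-dimensional) space $U+V$; equivalently, one has the explicit description via the CS (cosine-sine) decomposition. In this picture, $\|\pi_U - \pi_V\|_{\mathrm{op}} = \sin\theta_{\max}$ where $\theta_{\max} = \max_i \theta_i$, while $\|I - R_{U,V}\|_{\mathrm{op}} = 2\sin(\theta_{\max}/2) \le 2\sin\theta_{\max}$ (using $\sin(\theta/2) \le \sin\theta$ for $\theta \in [0,\pi/2]$, and noting all principal angles lie in $[0,\pi/2]$). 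Hence $\|I - R_{U,V}\|_{\mathrm{op}} \lesssim d_{\mathcal G}(U,V)$, which immediately gives the claimed bound for \emph{any} vector $b$, in particular for $b \in U$, with the implicit constant not even depending on $n$.

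Alternatively, if one wants to avoid invoking principal angles, I would argue more directly. Fix $b \in U$ with $|b|=1$. Then $\pi_V(b) \in V$, so $R_{U,V}$ moves $b$ to some unit vector in $V$, and I would like to compare $R_{U,V}b$ to $\pi_V(b)/|\pi_V(b)|$. We have $|b - \pi_V(b)| = |(\pi_U - \pi_V)(b)| \le d_{\mathcal G}(U,V)$ since $b = \pi_U(b)$; this controls how far $b$ is from $V$. The minimality of $R_{U,V}$ (it is the rotation solving the orthogonal Procrustes problem for the pair of subspaces) forces $R_{U,V}b$ to be the unit vector in $V$ that is "closest in direction" to $b$ along each principal axis, and a short computation shows $|b - R_{U,V}b| \lesssim_n |b - \pi_V(b)| \le d_{\mathcal G}(U,V)$. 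Rescaling by $|b|$ gives the result.

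The main obstacle is making the second, elementary argument fully rigorous without secretly re-deriving the principal-angle decomposition: the step "$R_{U,V}b$ is the closest unit vector in $V$" needs the structure of $R_{U,V}$ as a block-diagonal rotation, so in practice the cleanest route really is to set up the CS/principal-angle decomposition of the pair $(U,V)$ once and read off both $\|\pi_U-\pi_V\|_{\mathrm{op}}$ and $\|I-R_{U,V}\|_{\mathrm{op}}$ in those terms. I therefore expect the write-up to proceed by: (i) invoking the principal angle decomposition of $U, V$; (ii) computing $\|\pi_U - \pi_V\|_{\mathrm{op}} = \max_i \sin\theta_i$; (iii) computing $\|I - R_{U,V}\|_{\mathrm{op}} = \max_i 2\sin(\theta_i/2)$; (iv) using $2\sin(\theta/2) \le 2\sin\theta$ on $[0,\pi/2]$ to conclude $\|I - R_{U,V}\|_{\mathrm{op}} \le 2\, d_{\mathcal G}(U,V)$; and (v) applying this bound to $b$, since $|(I - R_{U,V})b| \le \|I - R_{U,V}\|_{\mathrm{op}}\,|b|$. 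Note that the statement as written allows an $n$-dependent constant, so even a lossy bookkeeping of constants suffices.
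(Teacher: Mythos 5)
Your argument is correct and, in fact, gives a sharper bound than the lemma asks for: the CS/principal-angle computation in your steps (i)--(v) yields $\|I - R_{U,V}\|_{\mathrm{op}} = 2\sin(\theta_{\max}/2) \le 2\sin\theta_{\max} = 2\,\|\pi_U - \pi_V\|_{\mathrm{op}}$, an absolute (dimension-free) constant. The paper does not prove the lemma itself but simply points to Qiu--Zhang--Li, whose framework is precisely the principal-angle/CS decomposition you set up, so your proposal is a self-contained version of the same approach; your second, "direct" sketch is, as you note, not fully rigorous without re-deriving that decomposition, so the cleaner write-up is indeed the first one.
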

For reference, see Sections 2 and 5 in \cite{Qiu2005UnitarilyIM}. Using the above, we can now compare the distances of flats within $U$ and $V$ which are related by $R_{U,V}$. For any $U\in \mathcal A (n,k)$ and $k'< k$, we let $\mathcal A (U,k'),$ be the set of affine $k'$-flats contained in $U$. We similarly define $\mathcal G(U,k')$ for $U \in \mathcal G(n,k)$.

\begin{lem}\label{lem-containDist}
Given $U,V\in \mathcal G(n,k)$, let $r>0$, $a\in U^\perp$, and $k'<k$. For any $X\in \mathcal A(U,k')$ such that $X\cap B(0,r) \ne \emptyset$ we have,
$$d_{\mathcal A}(R_{U,V}X+a,X+a)\lesssim_n (r+|a|+1)d_{\mathcal G}(U,V).$$
\end{lem}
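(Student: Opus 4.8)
The goal is to estimate $d_{\mathcal A}(R_{U,V}X + a, X + a)$ where $X \in \mathcal A(U, k')$ meets $B(0,r)$. The plan is to decompose $X$ into its linear direction and its translation part, control each under the rotation $R_{U,V}$, and assemble the pieces using the definition of $d_{\mathcal A}$.

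First I would write $X = Y + c$ where $Y \in \mathcal G(U, k') \subset \mathcal G(n,k')$ is the linear subspace parallel to $X$, and $c \in Y^\perp$ is the (unique, minimal-norm) translation vector with $c$ lying in $U$ since $X \subset U$. Because $X \cap B(0,r) \ne \emptyset$, the point $c$ — being the closest point of $X$ to the origin — satisfies $|c| \le r$. Then $R_{U,V}X = R_{U,V}Y + R_{U,V}c$, and $R_{U,V}Y \in \mathcal G(V, k')$. Now I would apply the triangle inequality for $d_{\mathcal A}$ together with Lemma~\ref{lem-grassDist}-style reasoning: $d_{\mathcal A}(R_{U,V}X + a, X + a)$ is controlled by (i) the Grassmannian distance $d_{\mathcal G(n,k')}(R_{U,V}Y, Y)$ between the two $k'$-dimensional directions, and (ii) the distance between the corresponding translation vectors, which is governed by $|R_{U,V}c - c| = |(I - R_{U,V})c|$, plus the adjustments needed because the translation vectors for affine $k'$-flats must be taken in the orthogonal complements of $Y$ and $R_{U,V}Y$ respectively (as in the proof of Lemma~\ref{lem-grassDist}, projecting onto a nearby complement costs a factor of the Grassmannian distance times the norm of the vector, and here that vector has norm $\lesssim r + |a|$).

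The two ingredients are then bounded as follows. For (ii), Lemma~\ref{lem-rotatUV} applied with $b = c \in U$ gives $|(I - R_{U,V})c| \lesssim_n |c|\, d_{\mathcal G}(U,V) \le r\, d_{\mathcal G}(U,V)$; the extra projection corrections contribute terms of the form $(r + |a|)\, d_{\mathcal G}(R_{U,V}Y, Y)$ and $|a|\, d_{\mathcal G}(U,V)$, all of the allowed shape. For (i), I need $d_{\mathcal G(n,k')}(R_{U,V}Y, Y) \lesssim_n d_{\mathcal G}(U,V)$: since $Y \subset U$, every unit vector in $Y$ is moved by $I - R_{U,V}$ a distance $\lesssim_n d_{\mathcal G}(U,V)$ by Lemma~\ref{lem-rotatUV}, and this controls the operator-norm distance between the orthogonal projections onto $Y$ and onto $R_{U,V}Y$ (a standard fact: if $R$ is orthogonal and $\|(I-R)|_Y\|_{\mathrm{op}} \le \eta$ then $\|\pi_Y - \pi_{RY}\|_{\mathrm{op}} \lesssim \eta$). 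Combining, every term is $\lesssim_n (r + |a| + 1)\, d_{\mathcal G}(U,V)$, which is the claim.

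The main obstacle I expect is bookkeeping around the translation vectors: the metric $d_{\mathcal A}$ insists on representing an affine flat $X + a$ with its translation part in the orthogonal complement of its linear direction, so $R_{U,V}X + a$ and $X + a$ have their translation parts living in different complements ($(R_{U,V}Y)^\perp$ versus $Y^\perp$), and one must carefully re-project and track the resulting error, exactly as in Lemma~\ref{lem-grassDist}'s proof but now with the base point $a$ and the internal offset $c$ both present, giving the $|a|$ and $r$ contributions. Once that reduction is set up cleanly, the estimate is a direct consequence of Lemmas~\ref{lem-grassDist} and~\ref{lem-rotatUV}.
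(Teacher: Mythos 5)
Your proposal is correct and takes essentially the same route as the paper: decompose $X = Y + c$ with $Y\in\mathcal G(U,k')$ and $c\in Y^\perp\cap U$, note $|c|\le r$, and bound the Grassmannian part and the (re-projected) translation part separately using Lemma~\ref{lem-rotatUV}. The only cosmetic difference is in the Grassmannian term: you derive $\|\pi_Y-\pi_{R_{U,V}Y}\|\lesssim_n d_{\mathcal G}(U,V)$ from $\|(I-R_{U,V})|_Y\|$ via Lemma~\ref{lem-rotatUV} and the identity $\pi_{RY}=R\pi_Y R^{-1}$, whereas the paper bounds it directly by $\|\pi_U-\pi_V\|$ using the containments $Y\subset U$, $R_{U,V}Y\subset V$; both yield the required estimate.
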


\begin{proof}
Let $R$ denote $R_{U,V}$ and let $X=X_0+b$ where $b\in X_0^\perp \cap U$ and $X_0\in \mathcal G(U,k')$. Notice that by assumption, $b\in B(0,r)$ and we have $$RX+a=RX_0+Rb+a-\pi_{RX_0}(a)$$ with $a-\pi_{RX_0}(a)\in (RX_0)^\perp$ and $Rb \in (RX_0)^\perp \cap U$. 
We have 
\begin{equation}\label{eq-cD1}
d_{\mathcal A}(X+a,RX+a)=\|\pi_{X_0}-\pi_{RX_0}\|+|b+a-Rb-(a-\pi_{RX_0}(a))|.
\end{equation}

As $X_0$ and $RX_0$ are contained in $V$ and $U$ respectively, we have 
\begin{equation}\label{eq-cD2}
\|\pi_{X_0}-\pi_{RX_0}\|\le \|\pi_V-\pi_U\|=d_{\mathcal G}(V,U).
\end{equation}
We also have, 
\begin{equation}\label{eq-cD3}
|b+a-Rb-(a-\pi_{RX_0}(a))|\le |(I-R)b|+|\pi_{RX_0}(a)|\lesssim_n (|b|+|a|)d_{\mathcal G}(X_0,RX_0),
\end{equation}
where we used Lemma~\ref{lem-rotatUV} in the last inequality.
Putting \eqref{eq-cD1}, \eqref{eq-cD2}, and \eqref{eq-cD3} together we are done.
\end{proof}

\subsection{Hausdorff Dimension}

We now recall the definition of Hausdorff measure and Hausdorff dimension on arbitrary metric spaces. Let $(X,d)$ be a metric space and let $A\subset X$. Note that some texts require that $X$ be separable, though this will not raise a problem in our setting as all of the metric spaces we will consider here are separable. Then, we denote the \textit{$s$-dimensional Hausdorff measure of $A$ at scale} $\delta$ as 
\[
H^s_\delta(A) := \inf \left\{\sum_{i=1}^\infty (\mathrm{diam} \,U_i)^s : \bigcup_{i=1}^\infty U_i \supset A, \mathrm{diam}\,U_i < \delta\right\}.
\]
Then, the $s$\textit{-dimensional Hausdorff measure} of $A$ is given by $H^s(A) = \lim_{\delta \to 0} H^s_\delta(A).$ Notice that the $n$-dimensional Hausdorff measure on $\R^n$ is (up to some multiplicative constant) the Lebesgue measure. Lastly, we define and denote the \textit{Hausdorff dimension} of $A$ as 
\[
\dim A = \inf \{s>0 : H^s(A) = 0\} = \sup\{s>0 : H^s(A) = \infty\}.
\]
Hence, every set of positive Lebesgue measure in $\R^n$ has Hausdorff dimension $n$. There are numerous other similar notions of dimension (e.g. packing dimension and box dimension), though we don't go into this further here.

There are a few key facts specifically regarding Hausdorff dimension that we will need to prove our main result. The first two are standard results in this area.

\begin{lem}\label{lem:posHausMea}
Let $(X,d)$ be a metric space. Given a Borel set $A\subset X$ such that $H^s(A)>0$, there exists a compact set $A'\subset A$ such that $0< H^s(A')<\infty$.
\end{lem}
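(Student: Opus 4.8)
The plan is to split on whether $H^s(A)$ is finite, after a soft reduction to the Euclidean case. All metric spaces appearing in this paper are Polish (complete and separable): $\R^n$ is, the Grassmannian $\mathcal G(n,k)$ is a compact metric space, and the affine Grassmannian $\mathcal A(n,k)$ is $\sigma$-compact; moreover $\mathcal G(n,k)$ and $\mathcal A(n,k)$ are smooth manifolds admitting finite atlases of bi-Lipschitz charts into Euclidean space. Since a bi-Lipschitz homeomorphism $f$ satisfies $c^{-s}H^s(E)\le H^s(f(E))\le c^{s}H^s(E)$ for some $c>0$ and maps compact sets to compact sets, it suffices to prove the lemma when $X=\R^m$; I would first record this reduction and then work in $\R^m$.

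For the case $H^s(A)<\infty$, I would use that $H^s$ is a Borel regular metric outer measure, so every Borel set (in particular $A$) is $H^s$-measurable, and hence $\mu(E):=H^s(E\cap A)$ defines a \emph{finite} Borel measure on the Polish space $X$. Every finite Borel measure on a Polish space is inner regular with respect to compact sets, so $\mu(A)=\sup\{\mu(K):K\subseteq A\text{ compact}\}$. Since $\mu(A)=H^s(A)>0$ by hypothesis, I would pick a compact $A'\subseteq A$ with $H^s(A')=\mu(A')>0$; monotonicity of $H^s$ then gives $H^s(A')\le H^s(A)<\infty$, so this $A'$ works.

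For the case $H^s(A)=\infty$, I would invoke the classical subset theorem of Besicovitch and Davies: every Borel (indeed analytic) subset of $\R^m$ of infinite $s$-dimensional Hausdorff measure contains a compact subset of finite positive $s$-dimensional Hausdorff measure; see e.g. \cite{mattila_geometryofsets}. Pushing this back through the bi-Lipschitz charts (which preserve compactness and the property of having finite positive $H^s$-measure) then gives the statement for the Grassmannians as well. I expect this subset theorem to be the only substantive ingredient — everything else (the chart reduction, monotonicity, Borel regularity, tightness of finite Borel measures on Polish spaces) is soft. For completeness I would sketch its proof idea: replace $H^s$ by a comparable dyadic net measure $\mathcal M^s$, and greedily prune the tree of dyadic cubes meeting $A$ so as to carve out a compact subset whose net measure is finite and strictly positive, transferring the conclusion back to $H^s$ via $\mathcal M^s\sim H^s$. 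The main obstacle is thus purely a matter of citing (or reproving) this standard dyadic-pruning argument; the rest of the proof is routine.
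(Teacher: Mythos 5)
Your proposal is correct and, once unpacked, is exactly the machinery behind the paper's one-line proof, which simply cites Borel regularity / the subset theorem for Hausdorff measure in Mattila's book: the finite-measure case is inner regularity (tightness) of the finite Borel measure $H^s\llcorner A$ on a Polish space, and the infinite-measure case is the Besicovitch--Davies subset theorem. Your preliminary reduction to $\R^m$ via bi-Lipschitz charts is a sensible way to make the citation literal for the Grassmannian and affine Grassmannian (and, as you note, for the finite-measure half it is not even needed), so this is the same route as the paper, just written out in full.
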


\begin{proof}
This follows from the Borel regularity of Hausdorff measure, see \cite[Chapter 4]{mattila_geometryofsets}.
\end{proof}

Furthermore, we need a slicing theorem, which heuristically states given $A\subset \R^n$ Borel with $\dim A >s$, then almost every ``slice'' of $A$ by an $m$-flat has dimension at least $s-m$.

\begin{thm}[\cite{mattilaBook}, Theorem 6.8]\label{thm:mattilaSlice}
Let $k\le s \le n$ and let $A \subseteq \R^n$ be a Borel set with $\dim A >s$.
Then there is a Borel set $E \subset \mathcal G(n, k)$ such that
$$\dim E \le k(n-k) + k - s$$
and
$\cH^k(\{a\in U : \dim A \cap (U^\perp + a) > s - k\}) > 0$ for all $U \in \mathcal G(n, k) \setminus E$.
\end{thm}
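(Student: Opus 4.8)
The plan is to recast the statement potential-theoretically and reduce it to an integral–geometric estimate on the Grassmannian, following the classical scheme used for exceptional sets in projection theorems. Fix $\tau$ with $s<\tau<\dim A$. By Frostman's lemma and Lemma~\ref{lem:posHausMea} pick a compactly supported Borel probability measure $\mu$ on $A$ with $\mu(B(x,r))\lesssim r^\tau$ for all $x,r$, so that the Riesz energy $I_\tau(\mu)$, which up to a constant equals $\int_{\R^n}|\widehat\mu(\xi)|^2|\xi|^{\tau-n}\,d\xi$, is finite (here $k<\tau<n$). For $U\in\mathcal G(n,k)$ disintegrate $\mu$ along the fibres of $\pi_U\colon\R^n\to U$, writing $\mu=\int_U\mu_{U,a}\,d(\pi_U\mu)(a)$ with $\mu_{U,a}$ a probability measure carried by $(U^\perp+a)\cap\mathrm{supp}\,\mu$. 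I would take
\[
E:=\Bigl\{\,U\in\mathcal G(n,k):\ \int_{\R^n}|\widehat\mu(\xi)|^2\,|\pi_{U^\perp}\xi|^{\tau-n}\,d\xi=+\infty\,\Bigr\},
\]
a Borel set, and prove: (i) for every $U\notin E$ the conclusion holds with the stronger threshold $\tau-k>s-k$; and (ii) $\dim E\le k(n-k)+k-\tau<k(n-k)+k-s$.

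For (i): since $\tau<n$, the weight $|\pi_{U^\perp}\xi|^{\tau-n}$ blows up as $\xi\to U$, so $U\notin E$ forces the Fourier transform of $\mu$ mollified only in the $U^\perp$-directions to lie in $L^2(\R^n)$; a short argument then shows $\pi_U\mu$ has no singular part, i.e.\ $\pi_U\mu=h\,\cH^k|_U$ with $h\in L^2(U)$. Granting this, the Fourier slice identity $\widehat\mu(\xi'+\xi'')=\int_U e^{-2\pi i\xi'\cdot a}\,\widehat{\mu_{U,a}}(\xi'')\,h(a)\,d\cH^k(a)$ for $\xi'\in U,\ \xi''\in U^\perp$, Plancherel in $\xi'$, Fubini in $\xi''$, and the Fourier formula $I_{\tau-k}(\sigma)\sim\int_{U^\perp}|\widehat\sigma(\eta)|^2|\eta|^{\tau-n}\,d\eta$ for measures $\sigma$ on the $(n-k)$-plane $U^\perp$ give
\[
\int_U I_{\tau-k}(\mu_{U,a})\,h(a)^2\,d\cH^k(a)\ \sim_{n,k}\ \int_{\R^n}|\widehat\mu(\xi)|^2\,|\pi_{U^\perp}\xi|^{\tau-n}\,d\xi\ <\ \infty .
\]
Hence $I_{\tau-k}(\mu_{U,a})<\infty$ for $\cH^k$-a.e.\ $a$ with $h(a)>0$; since $\{h>0\}$ has positive $\cH^k$-measure, this yields $\cH^k(\{a\in U:\dim(A\cap(U^\perp+a))\ge\tau-k\})>0$, as wanted.

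For (ii): it is enough to show $\nu(E)=0$ for every Borel probability measure $\nu$ on $\mathcal G(n,k)$ with $\nu(B(U,r))\lesssim r^\rho$ and $\rho>k(n-k)+k-\tau$. Integrating the defining integral of $E$ against $\nu$ and exchanging order, everything reduces to the uniform bound $\int_{\mathcal G(n,k)}|\pi_{U^\perp}\xi|^{\tau-n}\,d\nu(U)\lesssim|\xi|^{\tau-n}$, since then the double integral is a bounded multiple of $I_\tau(\mu)<\infty$, forcing the inner integral to be finite for $\nu$-a.e.\ $U$. For a unit vector $e$, the set $\{U:|\pi_{U^\perp}e|\le t\}=\{U:\mathrm{dist}(e,U)\le t\}$ is the $t$-neighbourhood of the submanifold $\{U\in\mathcal G(n,k):e\in U\}\cong\mathcal G(n-1,k-1)$, of dimension $(k-1)(n-k)$ and codimension $n-k$; covering it by $\lesssim t^{-(k-1)(n-k)}$ balls of radius $t$ gives $\nu(\{U:|\pi_{U^\perp}e|\le t\})\lesssim t^{\rho-(k-1)(n-k)}$, so
\[
\int_{\mathcal G(n,k)}|\pi_{U^\perp}e|^{\tau-n}\,d\nu(U)\ \lesssim\ \int_0^1 t^{-(n-\tau)-1}\,t^{\rho-(k-1)(n-k)}\,dt+O(1)\ <\ \infty
\]
exactly because $\rho>(k-1)(n-k)+(n-\tau)=k(n-k)+k-\tau$. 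This gives $\nu(E)=0$, hence $\dim E\le k(n-k)+k-\tau$; as $\tau\in(s,\dim A)$ was arbitrary, in fact $\dim E\le k(n-k)+k-\dim A\le k(n-k)+k-s$.

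The main obstacle is step (i): making the disintegration and the Plancherel/Fourier manipulations rigorous when the slice measures $\mu_{U,a}$ are singular and $\pi_U\mu$ is not a priori absolutely continuous. Passing from "$U\notin E$" to "$\pi_U\mu\ll\cH^k$" and then to the energy identity above is the genuine technical core — it is the same analysis underlying the Falconer–Mattila–Peres–Schlag exceptional-set estimates for projections, and is where the finiteness of $I_\tau(\mu)$ and the precise weight $|\pi_{U^\perp}\xi|^{\tau-n}$ (which concentrates the frequency integral near the plane $U$) are essential. Everything else — the Fourier formula for Riesz energy, the slice identity, and the integral-geometric estimate above — is routine bookkeeping.
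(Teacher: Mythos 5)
The paper does not supply its own proof of this statement: it is quoted from Mattila's book (his Theorem~6.8) and used as a black box, with the reader pointed to \cite[Chapter~6]{mattilaBook} for the argument. So there is no in-paper proof to compare against; what can be said is that your blind attempt correctly reconstructs the potential-theoretic scheme underlying Mattila's proof. In particular, step~(ii) is sound: the pointwise Grassmannian average $\int_{\mathcal G(n,k)}|\pi_{U^\perp}\xi|^{\tau-n}\,d\nu(U)\lesssim_\nu|\xi|^{\tau-n}$ for a $\rho$-Frostman measure $\nu$, deduced from the distributional estimate $\nu(\{U:|\pi_{U^\perp}e|\le t\})\lesssim t^{\rho-(k-1)(n-k)}$, is exactly the integral-geometric input used in this type of exceptional-set bound, and it gives $\dim E\le k(n-k)+k-\tau$ as you claim.

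Step~(i), however, has a genuine gap, and it is precisely the one you flag at the end. Finiteness of $\int|\widehat\mu(\xi)|^2\,|\pi_{U^\perp}\xi|^{\tau-n}\,d\xi$ controls a weighted $L^2$ norm of $\widehat\mu$ over all of $\R^n$, with a weight that is singular only near the plane $U$; it does not control the trace of $\widehat\mu$ on $U$, and so does not force $\pi_U\mu\ll\cH^k|_U$, let alone with $L^2$ density. If $\pi_U\mu$ were purely singular, the sliced measures $\mu_{U,a}$ would vanish for $\cH^k$-a.e.\ $a\in U$, the energy inequality $\int_U I_{\tau-k}(\mu_{U,a})\,h(a)^2\,d\cH^k(a)<\infty$ would be vacuous, and the conclusion $\cH^k(\{a:\dim(A\cap(U^\perp+a))>s-k\})>0$ would not follow. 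Filling this gap requires a companion exceptional set $E'=\{U\in\mathcal G(n,k):\pi_U\mu\not\ll\cH^k|_U\}$ together with a dimension bound for $E'$; this is the Falconer/Peres--Schlag absolute-continuity-of-projections estimate, and it is a separate (if similarly Fourier-analytic) argument rather than a ``short'' deduction from membership of $\widehat\mu$ in the weighted $L^2$ space defining $E$. With $E\cup E'$ in place of your $E$, and with the dimension bound for $E'$ established, the argument closes; this two-exceptional-set structure is how the slicing theorem is actually proved in the cited source. Everything else in your write-up --- the reduction via Frostman and Lemma~\ref{lem:posHausMea}, the choice of $\tau\in(s,\dim A)$ to upgrade ``$\ge\tau-k$'' to ``$>s-k$'', the Fourier formula for the $(\tau-k)$-energy of the slices, and step~(ii) --- is consistent with that proof.
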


\noindent See \cite[Chapter 6]{mattilaBook} for a proof and detailed history of the above slicing theorem. We note here that this theorem implies that our main result (Theorem \ref{thm:ourmainresult}) is sharp when $t = k(n-k)$. In particular, for all $A\subset \R^n$ Borel with $\dim A \geq n-k+s$, it follows that $A$ is an $(s,k(n-k);k)$-spread Furstenberg set. To see this, firstly assume with loss of generality that $\dim A = n-k+s$, and let $s' < s$ be arbitrary. Then, by Theorem \ref{thm:mattilaSlice}, it follows that for almost every $U\in \mathcal G(n,k)$,
\[
H^{n-k}(\{a\in U^\perp : \dim A\cap (U+a) >s'\}) >0.
\]
Hence, for almost every $U\in \mathcal G(n,k)$, there exists a translation vector $a_U \in U^\perp$ such that $\dim A\cap (U + a_U)>s'$. This shows that $A$ is an $(s',k(n-k);k)$-spread Furstenberg set. Since $s'<s$ was arbitrary, it follows that $A$ is an $(s,k(n-k);k)$-spread Furstenberg set. Thus our main theorem, which implies that 
\[
\dim A \geq n-k+s,
\]
is sharp. 

Lastly, a key theorem we need to obtain our main result is a dimensional bound of H\'era for $(s,t;k)$-Furstenberg sets.

\begin{thm}[\cite{heraFurstenberg}, Theorem 1.4]\label{thm-heraFurst}
Let $0 < s \le k$, and $0 \le t \le (k + 1)(n - k)$ be any real numbers. Suppose that $F\subset \R^n$ is an $(s,t;k)$-Furstenberg set. Then

$$ \dim F \ge s + \frac{t-(k -\lceil s \rceil)(n - k)}{
\lceil s \rceil + 1}.$$
\end{thm}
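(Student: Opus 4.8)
This is H\'era's theorem, so the plan is to reprove it; the natural route is $\delta$-discretization together with an induction on $\lceil s\rceil$, each step of which simultaneously decreases $n$, $k$ and $s$ by one (so $n-k$ is preserved). By Lemma~\ref{lem:posHausMea} and routine Frostman/pigeonholing it suffices to prove a single-scale statement: for every $\eps>0$ and every small enough $\delta$, if $\mathcal P\subset\mathcal A(n,k)$ is $\delta$-separated with $\#\mathcal P\gtrsim\delta^{-t+\eps}$ and each $P\in\mathcal P$ carries $\gtrsim\delta^{-s+\eps}$ many $\delta$-balls meeting $F$, then $F$ meets $\gtrsim\delta^{-D+O(\eps)}$ many $\delta$-balls, where $D=s+\frac{t-(k-\lceil s\rceil)(n-k)}{\lceil s\rceil+1}$; letting $\eps\to0$ and $\delta\to0$ gives the Hausdorff bound via Lemma~\ref{lem-BallVolGrass}. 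One also performs a two-ends/uniformization reduction so that, after passing to subsets at cost $\delta^{-O(\eps)}$, the discretized $s$-set inside each $P$ is spread at every scale (in particular it does not concentrate near any prescribed point).

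\emph{Base case $\lceil s\rceil=1$} (so $0<s\le1$, any $k$). Here it is enough to prove the stronger bound $\dim F\ge s+t/2$ by a bush argument. Counting incidences between $F$ and $\mathcal P$ yields $\gtrsim\delta^{-(s+t)}$ of them, so some $\delta$-ball $x$ lies in a subfamily $\mathcal P'\subset\mathcal P$ of size $\gtrsim\delta^{-(s+t)}/\#F_\delta$; by the two-ends reduction each $P\in\mathcal P'$ still carries $\gtrsim\delta^{-s}$ $\delta$-balls meeting $F$ in the annulus $\{1/2\le|y-x|\le1\}$, and since the flats are $\delta$-separated their $\delta$-neighbourhoods are essentially pairwise disjoint there (two $\delta$-separated $k$-flats through $x$ have $\delta$-neighbourhoods meeting only within $O(\delta)$ of $x$; one needs a quantitative version of this when $2k>n$, since two $k$-flats can share a $(2k-n)$-flat). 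Summing over $\mathcal P'$ gives $\#F_\delta\gtrsim(\delta^{-(s+t)}/\#F_\delta)\,\delta^{-s}$, i.e. $\#F_\delta\gtrsim\delta^{-(s+t/2)}$, and $s+t/2\ge D$ because $(k-1)(n-k)\ge0$.

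\emph{Inductive step $\lceil s\rceil=m\ge2$.} Assume the theorem for all smaller $\lceil\cdot\rceil$ (all $n,k$), and suppose for contradiction $\dim F<D(n,k,s,t)$. As above, produce a bush point $x$ and a subfamily $\mathcal P'$ of ``dimension'' $t''\ge s+t-\dim F-O(\eps)$; since $s+t-D=\frac{mt+(k-m)(n-k)}{m+1}=:t'$, the contradiction hypothesis gives $t''>t'$ (for $\eps$ small). Recentre at $x$ and slice by the concentric spheres $S(x,\rho)$, $\rho\in[1/2,1]$. Each $P\in\mathcal P'$ meets $S(x,\rho)$ in a $(k-1)$-subsphere, which is locally a $(k-1)$-flat, and applying the slicing theorem (Theorem~\ref{thm:mattilaSlice}) inside each $P$ — together with a Fubini argument in the $\rho$ variable to make the good set of radii uniform over $\mathcal P'$ — shows that for a positive-measure set of $\rho$ the set $F\cap S(x,\rho)$ is an $(s-1,t'';k-1)$-Furstenberg set on $S(x,\rho)\cong S^{n-1}$. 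Since $\lceil s-1\rceil=m-1$, the inductive hypothesis gives $\dim\bigl(F\cap S(x,\rho)\bigr)\ge D(n-1,k-1,s-1,t'')$ for those $\rho$; a standard lower bound for the dimension of a set foliated by large slices over a positive-measure base then gives $\dim F\ge D(n-1,k-1,s-1,t'')+1$. Finally, $t\mapsto D(n-1,k-1,s-1,t)$ is increasing and one checks $D(n-1,k-1,s-1,t')+1=D(n,k,s,t)$, so $t''>t'$ forces $\dim F\ge D(n-1,k-1,s-1,t'')+1>D(n,k,s,t)$, a contradiction.

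\emph{Main obstacle.} The real work is the discretization bookkeeping, concentrated in the two places where a ``generic/typical'' choice must be made uniform over the large subfamily $\mathcal P'$. First, in the base-case bush one must quantify the disjointness of the $\delta$-neighbourhoods of distinct flats of the bush inside the annulus; this is genuinely delicate in the high-dimensional regime $2k>n$ relevant here, and is handled by working at an intermediate scale $r\gg\delta$ and exploiting the two-ends spreading to absorb the overlap. Second, the slicing step has to be transferred from affine hyperplanes to the concentric spheres $S(x,\rho)$ and made to hold for a \emph{single} positive-measure set of radii simultaneously for (almost) all of $\mathcal P'$, rather than for a.e.\ radius separately for each plane; this, together with the promotion of the single-scale incidence bound back to a Hausdorff-dimension statement (and the verification that $\mathcal P'$ genuinely behaves like a $t''$-dimensional family of flats), is most naturally carried out inside the $\delta$-discretized framework and is the technically heaviest part of the argument.
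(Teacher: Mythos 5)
This theorem is imported from H\'era's paper \cite{heraFurstenberg} and is explicitly black-boxed in the present paper (there is no in-paper proof to compare against), so I review your reconstruction on its own terms.

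Your base case is false as stated. You claim that for $\lceil s\rceil = 1$ (arbitrary $k$) a bush argument yields $\dim F \geq s + t/2$, but this inequality does not hold for $k\geq 2$. Take $n=4$, $k=2$, let $\ell$ be a line through the origin, and let $\mathcal P$ be the family of all $2$-dimensional subspaces of $\R^4$ containing $\ell$. This family is diffeomorphic to $\mathcal G(3,1)$, so $\dim\mathcal P = 2$; taking $F:=\ell$ gives $\dim(F\cap P)=1$ for every $P\in\mathcal P$, so $F$ is a $(1,2;2)$-Furstenberg set, yet $\dim F = 1$ while $s+t/2=2$. (H\'era's bound gives $1 + (2 - 1\cdot 2)/2 = 1$, which is sharp here.) The failure is exactly where you wave your hands: the $\delta$-neighborhoods of distinct flats in the bush are not essentially disjoint in the annulus once $k\geq 2$, because distinct $k$-flats through $x$ can share a positive-dimensional subspace through $x$. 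In the example every flat in $\mathcal P$ contains all of $F$, so the contributions you sum over the bush are literally the same $\delta$-balls and the count collapses. This is not confined to the regime $2k>n$ that you flag (here $2k=n$), and the two-ends reduction does not repair it: $F\cap P=\ell$ is already spread at every scale on each flat. A bush controls the number of flats through a point, but for $k\geq 2$ that does not control how many \emph{new} $\delta$-balls those flats carry; that loss is precisely the $(k-\lceil s\rceil)(n-k)$ defect in H\'era's formula, which your intermediate target $s+t/2$ discards.

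Since your inductive step terminates in this base case, the whole argument falls. Even granting the base case, the inductive step would still need real work: Theorem~\ref{thm:mattilaSlice} slices by affine $(n-k)$-flats, not concentric spheres, so the passage to $S(x,\rho)$ needs a genuine argument (not just ``locally a $(k-1)$-flat''); the Fubini step that makes one positive-measure set of radii work simultaneously for a Hausdorff-positive-measure subfamily of $\mathcal P'$ is nontrivial; and deducing $\dim\mathcal P'\geq t''$ from a single-scale cardinality bound on the bush is itself a discretization-to-dimension promotion that must be justified. The correct way to handle the $\lceil s\rceil=1$ case for $k$-flats is not a single bush but a further reduction in $k$ — for instance, iterated generic slicing that drops $k$ and $n$ together down to $k=1$ before invoking a line bush — which is also what produces the $(k-\lceil s\rceil)(n-k)$ term.
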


\noindent We black box the above result for the purposes of this paper.

\subsection{Maximal Function Bounds}

The last result we need to obtain our main result is a maximal function bound obtained by Bourgain \cite{Bourgain1991BesicovitchTM} and Oberlin \cite{Oberlin2005BoundsFK}. To motivate this result, we begin with a discussion of the Kakeya maximal conjecture.

The Kakeya maximal conjecture arose in the study of the Kakeya conjecture. In particular, let $K \subset \R^n$ be a Kakeya set---a set with a line segment in each direction. To study the Hausdorff dimension of $K$, we can heuristically think of each line segment as a $\delta$-neighborhood of the line segment. This $\delta$-neighborhood is often referred to as a $\delta$\textit{-tube.}

\begin{notation}[$\delta$-tubes]
    Given $U \in \mathcal G(n,1)$ and $a\in U^\perp$, we let $T(U+a,\delta)$ denote the $\delta$\textit{-tube} (centered at $a$ and in the direction of $U$). Precisely, $T(U+a,\delta)$ is the $\delta$-neighborhood of $(U+a) \cap B(a,1/2)$.
\end{notation}

Then, we can define what is referred to as the \textit{Kakeya maximal function}. Given a locally integrable function $f: \R^n \to \mathbb{C}$, we define $\mathcal M_\delta^1f: \mathcal G(n,1) \to \R$ via the equation 
\begin{equation}\label{eqn:tubemax}
\mathcal M_\delta^1f(U) := \sup_{a\in U^\perp} |T(U+a,\delta)|^{-1} \int_{x\in T(U+a,\delta)} |f(x)|\,dx.
\end{equation}
Then, the Kakeya maximal conjecture states the following.

\begin{conjecture}[Kakeya maximal conjecture]
    For all $\epsilon>0$, there exists a constant $C_\epsilon>0$ such that for any function $f: \R^n \to \mathbb{C}$ and all $\delta>0$,
    \[
    \lVert \mathcal M_\delta^1(f)\rVert_{L^n(\mathcal G(n,1))} \leq C_\epsilon \delta^{-\epsilon} \lVert f\rVert_{L^n(\R^n)}.
    \]
\end{conjecture}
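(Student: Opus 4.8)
The displayed inequality is the full Kakeya maximal conjecture, which is open for $n\ge 4$; so what I can offer is not a proof but the shape a proof would have to take, together with the partial results one would build on and (for $n=2,3$) the known complete arguments.

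\noindent\textbf{Step 1 (reduction to tube counting).} First I would linearize the supremum in \eqref{eqn:tubemax} and pigeonhole $f$ into dyadic level sets, reducing to the following discrete statement: for every $\delta>0$ and every family $\{T_j\}$ of $\delta$-tubes with $\delta$-separated directions (one per direction, so $\#\{T_j\}\sim\delta^{-(n-1)}$, $|T_j|\sim\delta^{n-1}$, and $\big\lVert\sum_j\mathbf{1}_{T_j}\big\rVert_{L^1}\sim 1$), prove a bound of the form
\[
\Big\lVert \sum_j \mathbf{1}_{T_j} \Big\rVert_{L^{n/(n-1)}(\R^n)} \lesssim_{\epsilon} \delta^{-\epsilon}.
\]
By duality and interpolation against the trivial $L^1$ estimate this is equivalent to the stated maximal inequality up to $\delta^{-\epsilon}$ losses. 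I would then apply a two-ends (bush) reduction to assume the tubes through any fixed point are not concentrated in a small sub-ball, discarding the most degenerate configurations.

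\noindent\textbf{Step 2 (the case $n=2$, C\'ordoba).} For $n=2$ one can finish directly: $\big\lVert\sum_j\mathbf{1}_{T_j}\big\rVert_{L^2}^2=\sum_{j,k}|T_j\cap T_k|$, and two $\delta$-tubes whose directions differ by an angle $\theta\in[\delta,1]$ meet in area $\lesssim\delta^2/\theta$; summing dyadically over the $\sim\delta^{-1}$ directions gives $\sum_k|T_j\cap T_k|\lesssim\delta\log(1/\delta)$, hence $\big\lVert\sum_j\mathbf{1}_{T_j}\big\rVert_{L^2}^2\lesssim\log(1/\delta)$. Since $n/(n-1)=2$ when $n=2$, this is the bound of Step 1 with only a logarithmic loss, i.e.\ the conjecture in the plane.

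\noindent\textbf{Step 3 (higher $n$) and the main obstacle.} In higher dimensions I would run the classical hierarchy: a bush argument (a point lying in $\mu$ of the $T_j$ forces those tubes to be essentially disjoint away from that point, their directions being separated) gives the $L^{(n+1)/2}$ bound and $\dim K\ge(n+1)/2$, Wolff's hairbrush refinement improves this to $(n+2)/2$, and arithmetic-combinatorial input (Bourgain, Katz--Tao) and the polynomial method / polynomial partitioning (Dvir over $\mathbb{F}_q$; Guth, Katz--Laba--Tao, Guth--Zahl, Katz--Zahl over $\R$) push the exponent further, with the recent work of Wang--Zahl settling $n=3$ and the maximal-function version following from the same circle of ideas. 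The hard part, of course, is that none of these methods reaches the sharp $L^n$ exponent for $n\ge 4$: there is no known way to exploit the ``$\delta$-separated directions'' structure strongly enough to be sharp in all dimensions, and this \emph{is} the obstacle. It is exactly why the next subsection will not invoke the conjecture but instead the partial $(n,k)$-Besicovitch maximal bounds of Bourgain and Oberlin, which hold for $k$ large and are proved by $L^2$ ($k$-plane transform) methods rather than by resolving Kakeya.
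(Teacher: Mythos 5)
This statement is a conjecture, labeled as such in the paper, and the paper offers no proof of it; in fact the inequality is open for $n \ge 4$, so there is no paper argument to compare yours against. Your assessment is correct: the paper states the Kakeya maximal conjecture only as motivation and then works instead with the partial $(n,k)$-plane maximal bounds of Bourgain and Oberlin (Theorem \ref{thm:realKakeyalog}), exactly as you note at the end. Your sketch of the discretized dual formulation, the C\'ordoba $L^2$ overlap argument in the plane, and the hierarchy of partial progress in higher dimensions is accurate as far as it goes and is an honest description of why no proof can be given here.
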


It is a nontrivial fact that maximal function bounds like the one above can imply Hausdorff dimensional bounds. In fact, the Kakeya maximal conjecture implies the Kakeya conjecture, and the Kakeya maximal conjecture is only known in two dimensions \cite{cordobakakeya}. For more background and new results on maximal functions associated to such geometric objects, see \cite{zahl2023maximalfunctionsassociatedfamilies}.

It is then natural to explore higher dimensional analogues of the Kakeya maximal function. 

\begin{define}[$\delta$-slabs]
    Given $U \in \mathcal G(n,k)$ and $a\in U^\perp$, we let $T(U+a,\delta)$ denote the $\delta$\textit{-slab} (centered at $a$ and in the direction of $U$). Precisely, $T(U+a,\delta)$ is the $\delta$-neighborhood of $(U+a) \cap B(a,1/2)$.
\end{define}

Then, given a locally integrable function $f: \R^n \to \mathbb{C}$, we define the maximal function $\mathcal M_\delta^kf: \mathcal G(n,k)\to \R$ the same way as equation as \eqref{eqn:tubemax} where now $U \in \mathcal G(n,k)$. Using this notation, we can now state the key result of Bourgain and Oberlin we will need for our main result.

\begin{thm}[\cite{Bourgain1991BesicovitchTM,Oberlin2005BoundsFK}] \label{thm:realKakeyalog}
Let $k_0$ be the smallest positive integer such that $n \leq \frac{7}{3}2^{k_0 - 2} + k_0$. Then, fix $k \geq k_0 +1$ and let $\mathcal M_\delta^k$ be the maximal function for $k$-flats. Then, for all $\epsilon>0$, 
\[
\lVert \mathcal M_\delta^{k} f\rVert_{L^{(n-1)/2}(\mathcal G(n,k))} \lesssim_n \delta^{-\epsilon} \lVert f\rVert_{L^{(n-1)/2}(\R^n)}.
\]
\end{thm}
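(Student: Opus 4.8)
This is the Bourgain--Oberlin maximal inequality, and the plan is to recall the architecture of its proof, which we then use as a black box. The first move is the standard linearization and single-scale discretization: by duality, together with a pigeonholing that replaces the supremum over centres $a\in U^\perp$ in \eqref{eqn:tubemax} by the single slab achieving (most of) it, the bound $\|\mathcal M_\delta^{k} f\|_{L^{(n-1)/2}(\mathcal G(n,k))}\lesssim_n \delta^{-\epsilon}\|f\|_{L^{(n-1)/2}(\R^n)}$ becomes equivalent to a dual ``Kakeya inequality'' for $\delta$-slabs: for every maximal $\delta$-separated family $\{V_j\}\subset\mathcal G(n,k)$ (so $\#\{V_j\}\sim\delta^{-k(n-k)}$) and every choice of centres $a_j\in V_j^\perp$, the overlap function of the slabs $T_j:=T(V_j+a_j,\delta)$ satisfies $\big\|\sum_j\mathbf 1_{T_j}\big\|_{L^{((n-1)/2)'}(\R^n)}\lesssim_{n,\epsilon}\delta^{-\epsilon}$ after the natural normalisation by $\#\{V_j\}$ and $|T_j|\sim\delta^{n-k}$; equivalently, at this exponent the slabs are forced to behave essentially as if their overlaps were as small as two-plane geometry permits.

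The proof of that inequality has two ingredients. The geometric input is a bi-transversality estimate: if $d_{\mathcal G}(V,V')\sim\rho\ge\delta$ then $|T(V+a,\delta)\cap T(V'+a',\delta)|\lesssim_n\delta^{n-k}(\delta/\rho)$ uniformly in $a,a'$ (and $\lesssim_n\delta^{n-k}(\delta/\rho)^{\min(k,n-k)}$ in the generic, non-degenerate case), proved by putting the pair of $k$-planes in near-normal form and estimating the volume of the resulting intersection. Fed into a $TT^{*}/L^{2}$ computation this already gives the inequality for a range of exponents, but that range does not reach the fixed exponent $(n-1)/2$ for the small values of $k$ admitted here, because in the worst configuration two $k$-plane slabs overlap no better than two tubes. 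The device that covers the whole range $k\ge k_0+1$ is Bourgain's induction on the ambient dimension, combined with the bush argument: slicing $\R^n$ along a suitable pencil of affine hyperplanes turns a $k$-plane slab estimate in $\R^n$ into a $(k-1)$-plane slab estimate in roughly half the dimension, together with a transversality gain from the slicing; iterating down to a low-dimensional, planar-type base case (in the spirit of C\'ordoba \cite{cordobakakeya}) and tracking the loss at each step is exactly what produces the threshold $n\le\tfrac{7}{3}2^{k_0-2}+k_0$. Oberlin's \cite{Oberlin2005BoundsFK} contribution is to run this entire scheme in the maximal-function category---obtaining a genuine $L^{p}$ operator bound rather than merely positivity of Lebesgue measure for $(n,k)$-Besicovitch sets---and to optimise constants so as to recover the stated exponent $(n-1)/2$.

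The step I expect to be the real obstacle is keeping the induction honest: one runs the slicing $O(\log(1/\delta))$ times and must lose only $\delta^{-\epsilon'}$ per step with $\epsilon'$ a small fraction of $\epsilon$, while keeping all overlap estimates uniform over the placements $\{a_j\}$ and disposing of degenerate sub-families (slabs that are nearly parallel, or whose directions concentrate near a lower-dimensional subvariety of $\mathcal G(n,k)$) by a separate coarser argument. Since all of this is carried out in \cite{Bourgain1991BesicovitchTM,Oberlin2005BoundsFK}, we will cite those papers for Theorem~\ref{thm:realKakeyalog} and verify only downstream that the exponent $(n-1)/2$ and the hypothesis $k\ge k_0+1$ are in the form we need when the estimate feeds (via the associated $(n,k)$-Besicovitch maximal bound) into the proof of Theorem~\ref{thm:ourmainresult}.
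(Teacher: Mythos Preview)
The paper does not prove this theorem at all: it is stated with attribution to \cite{Bourgain1991BesicovitchTM,Oberlin2005BoundsFK} and immediately followed only by historical remarks, and is used downstream purely as a black box. So there is no ``paper's own proof'' to compare against, and your closing sentence---that you will cite those papers and use the estimate as input---is exactly what the paper does.

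That said, your sketch of the Bourgain--Oberlin argument is not quite faithful on two points. First, the recursion is not ``slicing $O(\log(1/\delta))$ times'' at a fixed $k$; it is an induction on $k$ (equivalently, on the ambient dimension of the quotient), run a bounded number of times independent of $\delta$. Bourgain's lemma takes an $L^p$ bound for the $(k-1)$-plane maximal operator in $\R^m$ and upgrades it to a bound for the $k$-plane operator in $\R^{2m-1}$ (roughly), which is where the doubling $n\mapsto 2n$ and hence the $2^{k_0}$ in the threshold comes from; the $\delta^{-\epsilon}$ loss is incurred once per step of this finite recursion, not $\log(1/\delta)$ times. Second, Oberlin's improvement of the threshold from $2^{k_0-1}$ to $\tfrac{7}{3}2^{k_0-2}$ does not come from sharpening constants in Bourgain's scheme, but from feeding in the stronger Kakeya maximal input of Katz--Tao at the base of the induction (the paper says this explicitly in the remarks after the statement). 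Your description of the $L^2$/overlap and bush ingredients is otherwise in the right spirit, but since both you and the paper ultimately black-box the result, these are expository rather than logical issues.
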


It may be helpful to note for intuition that the above theorem holds e.g. for $k\geq \lceil \log_2 n\rceil$. This will be a key result we need for our main theorem. Before moving on, we give some quick historical remarks. Firstly, the above bound was obtained by Bourgain assuming $n \leq 2^{k_0-2} + k_0$ using the Kakeya maximal function bounds (associated to lines); the larger range of $k_0$ was obtained by Oberlin by using better bounds for the Kakeya maximal function by Katz and Tao. Secondly, it may be worth noting that the above result as stated follows as (as a corollary) of bounds for the operator
\[
\mathcal N^kf(U) = \sup_{a\in U^\perp} \int_{x\in U} |f(a+x)| \, dx.
\]
Here, again, $f$ is a locally integrable function. In \cite{zahl2023maximalfunctionsassociatedfamilies}, this is referred to as a Bourgain-type maximal function, though we don't discuss this further here. Lastly, note that such maximal functions have also been studied in different geometries (such as finite fields), see e.g. \cite{EOT10}.



\section{Methodology}\label{sec-overview}

Using the tools discussed in the previous section, we now summarize the methodology for studying $(s,t;k)$-spread Furstenberg sets over finite fields due to \cite{DDL-2, Dhar_linear}. We will also take this time to outline where our tools from the Euclidean setting will be applied within this framework. For all that follows, let $k \geq k_0 + 1$ where $k_0$ is the integer defined in Theorem \ref{thm:realKakeyalog}.

\begin{itemize}
    \item \textbf{Concentration step:} Let $P$ be a $k$-flat that intersects with $F$ in dimension at least $s$. We show `most' $(k-k_0)$-flats $U$ contained in $P$ satisfy $\dim (F\cap U) \geq s - k_0$. Here, `most' will mean in a set of positive measure. Over finite fields this follows by a direct secnd moment method argument. In the Euclidean setting, we will use a slicing theorem (Theorem \ref{thm:mattilaSlice}). 
    \item \textbf{Kakeya step:} As we have $k$-flats in many directions intersecting with $F$ in dimension $s$, and each such flat contains a positive measure of $(k-k_0)$-flats intersecting with $F$ in dimension $s-k_0$, we will show a much larger set of $(k-k_0)$-flats contained in $\mathcal A(n,k-k_0)$ intersects with $F$ in dimension at least $s-k_0$. The key idea is that, fixing some $U\in \mathcal G(n,k-k_0)$, any $k$ flat in $\R^n$ containing $U$ contains a $k_0$-flat of shifts of $U$. This allows us to apply the Kakeya maximal function bounds due to Bourgain and Oberlin (Theorem \ref{thm:realKakeyalog}).
    \item \textbf{Affine Furstenberg step:} We now have a large set of \textit{affine} $(k-k_0)$-flats contained in $\mathcal A(n,k-k_0)$ with large intersection with $F$. 
    In particular, we will show that our set is a large $(k-k_0;s-k_0, \alpha)$-Furstenberg set where 
    \[
    \alpha = (k-k_0 + 1) (n-k+k_0) -k(n-k) + t.
    \]
    Over finite fields, another second moment argument is used to show that the resulting Furstenberg set is large in cardinality. In our setting, we will apply the Furstenberg set bound due to H\'{e}ra (Theorem \ref{thm-heraFurst}) to finish the argument.
\end{itemize}

\subsection{Warm up: All Directions}

As a warm up, we prove our main theorem when we have all directions of $k$-flats, i.e. $t = k(n-k).$

\begin{thm}
Let $k_0$ be the smallest positive integer satisfying $n \leq \frac{7}{3}\cdot 2^{k_0-2}+k_0$. Then, fix $k\geq k_0+1$ and let $F \subset \R^n$ be an $(s,t;k)$-spread Furstenberg set whose associated set of $k$-dimensional subspaces is $\mathcal P = \mathcal G(n,k)$. Then $\dim F \ge n-k+s$  
\end{thm}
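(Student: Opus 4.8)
The plan is to follow the three-step scheme laid out in the methodology section, specialized to the case $\mathcal P = \mathcal G(n,k)$, so that $t = k(n-k)$ and the final bound reads $\dim F \geq n - k + s$.

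\textbf{Step 1 (Concentration).} Fix $s' < s$ with $\lceil s' \rceil = \lceil s \rceil$ (or just work with $s' \uparrow s$ at the end). For each subspace $P \in \mathcal G(n,k)$ we are given a translate $P + a_P$ with $\dim F \cap (P + a_P) \geq s$. I would like to replace $P$ by a positive-measure family of $(k - k_0)$-dimensional subspaces contained in $P$ whose corresponding translates still meet $F$ in dimension $\geq s - k_0$. This is exactly Marstrand's slicing theorem (Theorem \ref{thm:mattilaSlice}) applied \emph{inside} the $k$-dimensional affine flat $P + a_P$: since $\dim (F \cap (P + a_P)) \geq s > k_0$, slicing by $k_0$-flats shows that outside an exceptional set $E_P \subset \mathcal G(P, k - k_0)$ of dimension at most $(k - k_0)k_0 + k_0 - s < (k-k_0)k_0$, every $(k-k_0)$-dimensional subspace $U \in \mathcal G(P, k - k_0)$ admits, within its orthogonal complement taken relative to $P$, a positive $\mathcal H^{k_0}$-measure of shift vectors $c$ such that $\dim (F \cap (P + a_P)) \cap (U + a_P + c) > s - k_0$. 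In particular a positive $\gamma$-measure set of $U \in \mathcal G(P, k-k_0)$ has \emph{some} translate meeting $F$ in dimension $> s - k_0$.

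\textbf{Step 2 (Kakeya).} Now collect, over all $P \in \mathcal G(n,k)$, the $(k-k_0)$-dimensional subspaces produced in Step 1, together with their good translates. The key geometric observation is that a fixed $U \in \mathcal G(n, k-k_0)$ is contained in a $k_0 \cdot (n-k)$-dimensional family of $k$-dimensional subspaces $P$, and for each such $P$ the slicing construction in Step 1 lives in a $k_0$-parameter family of translate directions; Fubini over $\mathcal G(n,k)$ then shows that the set $\mathcal U$ of \emph{affine} $(k-k_0)$-flats meeting $F$ in dimension $> s - k_0$ has dimension (as a subset of $\mathcal A(n, k-k_0)$) at least $k(n-k) + k_0 = (k-k_0)(n-k) + k_0(n-k) + k_0 - \text{(loss)}$; more precisely, since we started from a full $k(n-k)$-dimensional set of directions and each contributes a $k_0$-dimensional slab of shifts, one gets a set of affine $(k-k_0)$-flats of dimension at least $k(n-k) + k_0$, i.e.\ one that is "full in directions" as a subset of $\mathcal A(n, k-k_0)$ — the maximal function bound of Bourgain and Oberlin (Theorem \ref{thm:realKakeyalog}), valid because $k - k_0 \geq 1$, is what guarantees that this many affine flats with nontrivial intersection with $F$ forces $F$ itself to be positive-dimensional in the right range, rather than being a measure-theoretically degenerate family. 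I would quantify this by covering $F$ at scale $\delta$ and running the maximal inequality with $f = \mathbf 1_{B(F,\delta)}$ in $L^{(n-1)/2}$.

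\textbf{Step 3 (Affine Furstenberg).} Apply Hera's bound (Theorem \ref{thm-heraFurst}) with parameters $k \mapsto k - k_0$, $s \mapsto s - k_0$, and $t \mapsto (k - k_0 + 1)(n - k + k_0)$, the last being the full dimension of $\mathcal A(n, k - k_0)$, which is what Steps 1--2 deliver. Substituting, $\dim F \geq (s - k_0) + \frac{(k - k_0 + 1)(n-k+k_0) - (k - k_0 - \lceil s - k_0\rceil)(n - k + k_0)}{\lceil s - k_0 \rceil + 1} = (s - k_0) + \frac{(\lceil s \rceil - k_0 + 1)(n - k + k_0)}{\lceil s \rceil - k_0 + 1} = s - k_0 + n - k + k_0 = n - k + s$, using $\lceil s - k_0\rceil = \lceil s\rceil - k_0$ since $k_0$ is an integer. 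Taking $s' \uparrow s$ removes the epsilon loss.

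\textbf{Main obstacle.} The delicate point is Step 2: turning the positive-measure ("most") conclusions of Step 1 — which are stated one $k$-flat at a time — into a genuine \emph{dimension} lower bound for the family $\mathcal U \subset \mathcal A(n, k - k_0)$, and doing so through the maximal function rather than a naive Fubini, since the exceptional sets $E_P$ vary with $P$ and must be controlled uniformly. One must be careful that the parametrization $(P, U, c) \mapsto U + (\text{appropriate shift})$ is measure-theoretically well-behaved and non-degenerate; the geometric lemmas (Lemmas \ref{lem-grassDist}, \ref{lem-rotatUV}, \ref{lem-containDist}) are presumably invoked here to control how distances between the relevant affine flats distort under the rotations $R_{U,V}$. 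This is the step where the $\delta$-discretization and the Bourgain--Oberlin slab maximal inequality do the real work, and where the restriction $k \geq k_0 + 1$ is used.
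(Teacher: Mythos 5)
Your overall three-step structure matches the paper's method almost exactly, and Steps 1 and 3 are right: the slicing theorem applied inside each $P + a_P$ does produce a positive-measure family of good $(k-k_0)$-flats, and the arithmetic in your application of H\'era's bound with $t = (k-k_0+1)(n-k+k_0)$ cleanly simplifies to $n - k + s$ using $\lceil s - k_0 \rceil = \lceil s\rceil - k_0$.

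Step 2 is, as you flag yourself, where your sketch drifts. Two concrete issues. First, the dimension count $k(n-k) + k_0$ for the family $\mathcal U \subset \mathcal A(n,k-k_0)$ is not the right target: what you actually want is that $\mathcal U$ has \emph{positive measure} in $\mathcal A(n,k-k_0)$, equivalently full dimension $(k - k_0 + 1)(n - k + k_0)$, which is what makes H\'era's theorem give the clean bound $n - k + s$; your expression $k(n-k)+k_0$ is neither of these (e.g.\ for $k_0 = 1$ it equals $k(n-k)+1$ while the true full dimension is $k(n-k+1)$). Second, the proposed discretization $f = \mathbf 1_{B(F,\delta)}$ is aimed at the wrong object: the maximal inequality is applied to (covers of) the family $\mathcal U$ of affine $(k-k_0)$-flats, not to $F$ directly; it is H\'era's theorem, in Step 3, that converts information about $\mathcal U$ into a statement about $F$.

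In fact, for this warm-up case ($\mathcal P = \mathcal G(n,k)$) no $\delta$-discretization is needed at all. The cleaner argument runs per direction: fix $V \in \mathcal G(n, k-k_0)$ and consider the set $\cS_V \subset \R^n / V \cong \R^{n-k+k_0}$ of shifts $x$ for which $V + x$ meets $F$ in dimension $\geq s - k_0$. For every $k_0$-dimensional direction in $\R^n/V$ — equivalently, every $k$-subspace $P \supset V$ — Step 1 gives a $k_0$-flat in $\R^n/V$ (namely $(P/V) + a_P$) containing a positive-measure subset of $\cS_V$. So $\cS_V$ is a positive-measure-per-direction Besicovitch-type set for $k_0$-flats in $\R^{n-k+k_0}$, and the Bourgain--Oberlin bound (applicable since $n - k + k_0 \leq n - 1 < \frac{7}{3}2^{k_0-2}+k_0$) forces $\cS_V$ to have positive Lebesgue measure. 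Fubini over $V \in \mathcal G(n, k-k_0)$ then gives $\lambda_{n,k-k_0}(\cS) > 0$, and hence that $\cS$ has full Hausdorff dimension in $\mathcal A(n,k-k_0)$ — now H\'era finishes exactly as in your Step 3. The quantitative machinery (Lemmas~\ref{lem-grassDist}--\ref{lem-containDist}, dyadic covers of $\cS$, careful $\ell^p$ summation) that you allude to is only needed later for the general theorem where $t < k(n-k)$, so they do not belong in this warm-up.
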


\begin{proof}
For each $P \in \mathcal G(n,k)$, let $a_P\in \R^n$ be such that $\dim (F \cap (P+a_P)) \geq s$.

Without loss of generality, for all $P \in \mathcal G(n,k)$ we can assume not only that $\dim(F\cap (P + a_P))\ge s$, but also $0<H^s(F\cap (P+ a_P))<\infty$. This reduction follows by looking at $s'=s-\epsilon$ which means $H^{s'}(F\cap (P + a_P))=\infty$ and using Proposition~\ref{lem:posHausMea}.

Let $\cS$ be the set of flats in $\mathcal A(n,k-k_0)$ such that their intersection with $F$ has dimension at least $s-k_0$. We claim that $\cS$ has positive measure. The result then follows from applying the H\'era's Furstenberg set bound (Theorem \ref{thm-heraFurst}).

For every $V \in \mathcal G(n,k-k_0)$, the set of flats parallel to $V$ in $\mathcal A(n,k-k_0)$ is isomorphic to the space $\R^{n-k+k_0}\cong \R^n/V$. For any flat $U\in G(\R^n/V,k_0)$ we can find a shift $a_{V,U}$ such that $a_{V,U}+(\Span\{U,V\})$ intersects with $F$ in positive $H^s$ measure. By the slicing theorem (Theorem \ref{thm:mattilaSlice}), a positive measure of $(k-k_0)$-flats in $a_{V,U}+(\Span\{U,V\})$ intersect with $F$ in dimension at least $s-k_0$.

This gives a $k_0$-flat in $\R^n/V$ parallel to $U$ which contains a positive measure of points contained in $\cS$ restricted to $\R^n/V$. As this is true for all $U\in G(\R^n/V,k_0)$, $\cS$ restricted to $\R^n/V$ has positive measure by Bourgain and Oberlin's Kakeya maximal bounds (Theorem \ref{thm:realKakeyalog}). As this is true for all $U\in \mathcal  G(\R^n/V,k_0)$, we are done.
\end{proof}

\section{Proof of Main Theorem}\label{sec:mainresultproof}

The proof of the main theorem is essentially the same as our warm up proof. The extra details are to work with Hausdorff dimension via $\delta$-discretization in standard ways.

\begin{thm}\label{thm-main}
Let $k_0$ be the smallest positive integer such that $n \leq \frac{7}{3} 2^{k_0 - 2} + k_0$. Then, fix $k \geq k_0 + 1$, $s\in (k_0,k]$ and $t\in (0,k(n-k)]$. Then, any $(s,t;k)$-spread Furstenberg set $F\subset \R^n$ satisfies
\[
\dim F \geq n-k + s - \frac{k(n-k) - t}{\lceil s\rceil -k_0 +1}.
\]
\end{thm}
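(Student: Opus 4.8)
The plan is to follow the three-step outline (\textbf{Concentration}, \textbf{Kakeya}, \textbf{Affine Furstenberg}) already sketched in Section \ref{sec-overview}, but carry everything out at the level of a $\delta$-discretization so that the partial-dimension hypothesis $t \le k(n-k)$ can be handled. First I would fix a small $\epsilon>0$ and pass to $s' = s-\epsilon$, $t' = t - \epsilon$, using Lemma \ref{lem:posHausMea} to assume each relevant slice carries positive finite Hausdorff measure; at the end let $\epsilon\to 0$. For the set of subspaces $\mathcal P \subset \mathcal G(n,k)$ with $\dim\mathcal P \ge t$, I would fix a scale $\delta>0$, take a maximal $\delta$-separated subset, and record that the number of $\delta$-balls needed to cover $\mathcal P$ is $\gtrsim \delta^{-t'}$ (this is where $\dim \mathcal P \ge t$ enters, via the definition of Hausdorff content and Lemma \ref{lem-BallVolGrass}).

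Next comes the \textbf{Concentration step}: for each $P\in\mathcal P$ with its translate $P+a_P$ satisfying $0<H^s(F\cap(P+a_P))<\infty$, I apply the Marstrand slicing theorem (Theorem \ref{thm:mattilaSlice}) \emph{inside} the $k$-plane $P+a_P \cong \R^k$, slicing by $(k-k_0)$-dimensional subspaces: since $\dim(F\cap(P+a_P))\ge s > k_0$, for a positive-$\gamma_{k,k-k_0}$-measure set of directions $V\in\mathcal G(P,k-k_0)$ there is a shift producing a $(k-k_0)$-flat meeting $F$ in dimension $\ge s-k_0$. The \textbf{Kakeya step} is the geometric heart: fixing $V\in\mathcal G(n,k-k_0)$ and quotienting by $V$ (work in $\R^n/V\cong\R^{n-k_0}$, or equivalently restrict attention to flats containing a translate of $V$), each $k$-flat containing a translate of $V$ descends to a $k_0$-flat in the quotient, and the positive-measure family of good $(k-k_0)$-subslices descends to a positive-measure subset of that $k_0$-flat. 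The collection of all such $k_0$-flats (one family per good $P\in\mathcal P$ lying over the chosen $V$) forms, after $\delta$-discretization, a Besicovitch-type configuration to which the Bourgain--Oberlin maximal bound (Theorem \ref{thm:realKakeyalog}) applies: it forces the union of these $k_0$-tubes-worth of directions to have measure $\gtrsim \delta^{o(1)}$ times its expected value, hence the set $\mathcal S\subset\mathcal A(n,k-k_0)$ of $(k-k_0)$-flats meeting $F$ in dimension $\ge s-k_0$ has $\dim\mathcal S \ge \alpha := (k-k_0+1)(n-k+k_0) - k(n-k) + t$. Tracking the $\delta$-counting carefully here — multiplying the per-$V$ direction count from the maximal inequality by the Lebesgue count of distinct $V$'s and the transverse-shift count — is exactly the bookkeeping the theorem statement says is ``standard'' but tedious; Lemmas \ref{lem-grassDist}, \ref{lem-rotatUV}, and \ref{lem-containDist} are needed precisely to check that rotating/translating these flats distorts distances by only bounded factors, so $\delta$-separation is preserved up to constants.

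Finally, the \textbf{Affine Furstenberg step}: $\mathcal S$ witnesses that $F$ is an $(s-k_0,\alpha;k-k_0)$-Furstenberg set, so H\'era's bound (Theorem \ref{thm-heraFurst}) with parameters $s\mapsto s-k_0$, $t\mapsto\alpha$, $k\mapsto k-k_0$ gives
\[
\dim F \;\ge\; (s-k_0) + \frac{\alpha - \bigl((k-k_0)-\lceil s-k_0\rceil\bigr)(n-k+k_0)}{\lceil s-k_0\rceil + 1}.
\]
Since $k_0$ is an integer, $\lceil s-k_0\rceil = \lceil s\rceil - k_0$, so the denominator is $\lceil s\rceil - k_0 + 1$; substituting $\alpha = (k-k_0+1)(n-k+k_0) - k(n-k)+t$ and simplifying the numerator — the $(n-k+k_0)$ terms telescope, leaving $(n-k+k_0)\lceil s-k_0\rceil + t - k(n-k)$, and after adding back $(s-k_0)$ the algebra collapses to $n-k+s - \frac{k(n-k)-t}{\lceil s\rceil - k_0 + 1}$ — yields exactly the claimed bound. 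Letting $\epsilon\to0$ finishes.

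I expect the \textbf{Kakeya step} to be the main obstacle. Conceptually it is clean (a $k$-flat through $V$ is a $k_0$-flat of shifts of $V$, so Besicovitch-type maximal estimates apply), but turning it into a rigorous $\delta$-discretized statement requires care: one must verify that the ``good'' $(k-k_0)$-subslices obtained from slicing \emph{different} $k$-flats $P+a_P$ are genuinely distinct points of $\mathcal A(n,k-k_0)$ at scale $\delta$ (not overcounted), control how the positive-measure sets from the slicing theorem interact with the maximal inequality's $\sup$ over shifts, and correctly combine the $(n-k+k_0)$-dimensional Lebesgue direction of $\mathcal A(n,k-k_0)$-fibers with the Grassmannian count coming from $\mathcal P$. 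The interplay between the qualitative ``positive measure'' conclusions of the slicing and maximal theorems and the quantitative $\delta^{-\beta}$ counting needed to feed H\'era's theorem is the delicate point; a clean way to do it is to run the whole argument at a single scale $\delta$ with a pigeonholed dyadic slice-dimension, then take $\delta\to0$.
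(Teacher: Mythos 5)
Your proposal correctly reproduces the three-step outline (Concentration, Kakeya, Affine Furstenberg), which the paper itself lays out in Section \ref{sec-overview}, and your closing algebra for H\'era's theorem (substituting $s\mapsto s-k_0$, $k\mapsto k-k_0$, $t\mapsto\alpha$, with $n$ unchanged, and using $\lceil s-k_0\rceil = \lceil s\rceil - k_0$) is correct. But there is a genuine gap in the Kakeya step, which you rightly flag as delicate but for which your suggested mechanism does not work. You propose to ``run the whole argument at a single scale $\delta$ with a pigeonholed dyadic slice-dimension, then take $\delta\to 0$,'' and earlier you invoke a single-scale covering bound for $\mathcal P$ of the form $\gtrsim \delta^{-t'}$. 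Neither move is available: Hausdorff dimension $\geq t$ gives no single-scale covering lower bound on $\mathcal P$, and more importantly the object whose covers one must control is the slice set $\mathcal S\subset\mathcal A(n,k-k_0)$, whose covers are \emph{outputs}, not inputs. The paper's argument runs in the opposite direction: it takes an \emph{arbitrary} cover of $\mathcal S$ by balls $\{B_j\}$ of varying radii, dyadically groups them into $J_a$, and --- this is the step your sketch has no analogue of --- decomposes $\mathcal P'$ into pieces $\mathcal P'_a$ according to which dyadic scale $a$ captures a $\gtrsim 1/a^2$ fraction of each $P$'s slice family $a_P+\mathcal H_P$. Only after this pigeonhole does the maximal function bound get applied scale by scale, and the resulting estimate $\sum_{j\in J_a} r_j^{(k-k_0+1)(n-k+k_0)}\gtrsim |\mathcal C(\mathcal P',a)|\,2^{-ak(n-k)}$ is summed over $a$ to recover a Hausdorff content bound for $\mathcal P'$ from the cover of $\mathcal S$. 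Without this multi-scale decomposition you cannot handle $t < k(n-k)$, because the positive-measure conclusions of the slicing theorem live at no fixed scale and must be matched to the scales present in an arbitrary cover of $\mathcal S$; a single-scale pigeonhole loses the connection between $\dim\mathcal P$ and $\sum_j r_j^\alpha$.

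Two further corrections: with $V\in\mathcal G(n,k-k_0)$ you have $\R^n/V\cong\R^{n-k+k_0}$, not $\R^{n-k_0}$ as you wrote. And the boundedness reduction for $F$ (restricting to $F\cap B(0,i)$ and using countable subadditivity of Hausdorff measure to find an $i$ for which $F_i$ is still an $(s-\epsilon,t-\epsilon;k)$-spread Furstenberg set) is needed before invoking Lemmas \ref{lem-grassDist}--\ref{lem-containDist}, since those distance comparisons degrade with $|a_P|$; your sketch should include it explicitly. The Concentration and Affine Furstenberg steps of your plan are otherwise sound and track the paper's Theorem \ref{thm:mattilaSlice} and Theorem \ref{thm-heraFurst} applications correctly.
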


\begin{proof}
Let $F$ be a $(s,t;k)$-spread Furstenberg set satisfying the above hypotheses. Let $\mathcal P \subset \mathcal G(n,k)$ denote the associated set of $k$-dimensionsl subspaces with associated translation vectors $\{a_P\}_{P\in \mathcal P}$ such that $\dim (F\cap (P+a_P)) \geq s$.

Without loss of generality we can assume $F$ is bounded. To see this, let $\epsilon > 0$ and consider $F_i = F\cap B(0,i), i\in \mathbb N$. We have $\bigcup_i F_i = F$. For each $F_i$, let $\mathcal P_i$ be the set of $k$-dimensional subspaces such that $\dim (F_i \cap (P+a_P))\geq s-\epsilon$. We have $\bigcup_i \mathcal P_i \supseteq \mathcal P$. Furthermore, $P_i$ may not be Borel but $F_i$ clearly is. By the countable sub-additivity of the Hausdorff measure we must have an $i\in \mathbb{N}$, such that $F_i$ is a $(s-\epsilon,t-\epsilon;k)$-spread Furstenberg set. From now on we assume that $F$ is within $B(0,1/8)$. Furthermore, as in the warm-up, we can assume not only $\dim(F \cap (P + a_P))\ge s$ but also $0< H^s(F\cap (P + a_P) )<\infty$. This follows by letting $s'=s-\epsilon$ (so that $H^{s'}(F\cap (P+ a_P))=\infty$) and applying Proposition~\ref{lem:posHausMea}.

Let $\cS\subseteq \mathcal A(n,k-k_0)$ be the set of $(k-k_0)$-flats which intersect with $F$ in dimension at least $s-k_0$. We claim that $\cS$ has dimension at least $$\alpha = (k-k_0+1)(n-k+k_0)-k(n-k) + t.$$ This and H\'era's Furstenberg set bound (Theorem~\ref{thm-heraFurst}) will complete the proof.

For each $P\in \cP$, we know $P+a_P$ intersects with $F$ in positive $H^s$ measure. Hence, for all $P \in \cP$, Theorem~\ref{thm:mattilaSlice} implies that a positive measure $t_P>0$ of $(k-k_0)$-flats in $P + a_P$ intersect with $F$ with dimension at least $s-k_0$. Given such a flat $P\in \mathcal G(n,k)$, we let $\cH_P\subset \mathcal G(n,k-k_0)$ be the set of $k-k_0$ flats contained in $P$ which, after a shift by $a_P$, intersect with $F$ with dimension at least $s-k_0$. By construction, $|\cH_P|\ge t_P$. We let $$a_P+\cH_P=\{a_P+V|V\in \cH_P\}$$
and
$$\cP_\ell=\{P\in \cP| t_P\ge 1/\ell\}.$$
As $\bigcup_{\ell=1}^\infty \cP_\ell = \cP$, we have for every $\eta >0$ there exists an $\ell_\eta$ such that $\cP_{\ell_\eta}$ has dimension at least $\dim \cP -\eta=t-\eta$. We fix an arbitrary $\eta>0$ and let $\cP'=\cP_{\ell_\eta}$, $\ell'=\ell_\eta$, and $t'=t-\eta$.

Consider a covering of $\cS$ by balls $B_1,\hdots,B_j,\hdots$ of radius at most $r_j<1$. Let $J_a = \{j| 2^{-a}< r_j\le  2^{-a+1}\}$. Define $\cP'_a$ as the set of $P\in \mathcal G(n,k)$ such that 
$$\left|(a_P+\cH_{P})\cap \bigcup\limits_{j\in J_a} B_j\right| \ge \frac{t_P}{100 a^2}.$$
As $\sum_a 1/a^2 < 100$ and $$\sum_a \left|(a_P+\cH_P)\cap \bigcup_{j\in J_a} B_j\right| \ge |\cH_P|\ge t_P>0$$ we have that $\bigcup_a \cP'_a=\cP'$.

Fix an $a$ and let $f_{2^{-a}}$ be the indicator function of the union of the balls $\bigcup_{j\in J_a} B_j \subset \mathcal A(n,k-k_0)$. For every subspace $U\in \mathcal G(n,k-k_0)$, the set of flats parallel to $U$ in $\mathcal A(n,k-k_0)$ is isomorphic to $\R^{n-k+k_0}\cong U^\perp$. We also see that for a subspace $U' \in \mathcal G(n,k)$ parallel to $U$ and $a\in \R^n$, $a+U'/U$ corresponds to a $k_0$-flat in $\R^n / U$. Thus, we have
\begin{equation}\label{eq-volBreak}
\int\limits_{W\in \mathcal A(n,k-k_0)} |f_{2^{-a}}(W)| \, d\lambda_{n,k-k_0}(W) = \int\limits_{U\in \mathcal G(n,k-k_0)} \int\limits_{x \in \R^n/U} f_{2^{-a}}(x+U) \, d x d \gamma_{n,k-k_0}(U).
\end{equation}
For ease of notation, we stop using the notation $\lambda_{n,k}$ and $\gamma_{n,k}$ as it will be clear which measure to integrate against from context.
Given $U \in \mathcal G(n,k-k_0)$ we define the maximal function $\cM_{\delta}^{k_0,U},$ which takes in locally integrable functions over $\mathcal A(n,k-k_0)$ and outputs a function over the set of $V\in \mathcal G(n,k)$ containing $U$. In particular, let
$$\cM_{\delta}^{k_0,U} f(V) = \sup_{a \in U^\perp} \int_{W\in T(V/U+a,\delta)} |T(V/U+a,\delta)|^{-1} f(W) dW, $$
where $U^\perp \cong \R^n/U$ represents the set of $k_0$-flats parallel to $U$. This is just the usual Kakeya maximal function for $k_0$-dim flats, but we are defining a family of these for functions over $\mathcal A(n,k-k_0)$ depending on the direction of $U \in \mathcal G(n,k-k_0)$ we are restricting to. Hence, we can apply Bourgain and Oberlin's Kakeya maximal bounds for $k_0$-flats (Theorem \ref{thm:realKakeyalog}) and Fubini's theorem to obtain (with $p=(n-k_0-1)/2$)
\begin{align}\label{eq-Fubini}
\int\limits_{U\in \mathcal G(n,k-k_0)} \int\limits_{x \in \R^n/U} f_{2^{-a}}(x+U) \, dx dU &\gtrsim_{\eps,p} 2^{ap\eps}\int\limits_{U\in \mathcal G(n,k-k_0)} \int\limits_{V\in \mathcal G(n,k),V\supset U} \left(\cM_{2^{-a}}^{k_0,U} f_{2^{-a}}(V)\right)^p \, dV dU \nonumber\\
&\gtrsim_{\eps,p}  2^{ap\eps} \int\limits_{V\in \mathcal G(n,k)} \int\limits_{U\in \mathcal G(n,k-k_0), U\subset V} \left(\cM_{2^{-a}}^{k_0,U} f_{2^{-a}}(V)\right)^p \,  dU\, dV\nonumber\\
&\gtrsim_{\eps,p}  2^{ap\eps} \int\limits_{V\in \mathcal G(n,k)} \left(\int\limits_{U\in \mathcal G(n,k-k_0), U\subset V} \cM_{2^{-a}}^{k_0,U} f_{2^{-a}}(V) \,\,  dU\right) dV.
\end{align}
The last inequality follows from convexity of $x^p$. Next we prove the following claim which uses the fact that $F$ is bounded.

\begin{claim}
There exists a radius $\delta_a\sim_n 2^{-a}$ such that for any $P\in \cP'_a$ and any $V\in B_{\mathcal G(n,k)}(P,r_a)$ we have,
\begin{align*}
\int\limits_{U\in \mathcal G(n,k-k_0), U\subset V} \cM_{2^{-a}}^{k_0,U} f_{2^{-a}}(V) \,  dU & \gtrsim_{n,k,k_0} \frac{1}{ \ell' a^2}.
\end{align*}
\end{claim}
\begin{proof}
Let $\mathcal G(V,k-k_0)$ denote the set of $(k-k_0)$-flats contained in $V$. By definition,
\begin{align*}
\int\limits_{U\in \mathcal G(V,k-k_0)} \cM_{2^{-a}}^{k_0,U} f_{2^{-a}}(V) \,  dU &\ge \iint\limits_{U\in \mathcal G(V,k-k_0), x\in T(V/U+a_P, 2^{-a})}   |T(V/U+a_P, 2^{-a})|^{-1} f_{2^{-a}}(x)\, dx dU.
\end{align*}
For any $W\in \mathcal A(n,k)$, let $\mathcal A(W,k-k_0)$ denote the set of $k-k_0$ flats in $W$. Any $B_j,j\in J_a$ has radius at least $2^{-a}$. If such a $B_j$ intersects with $\mathcal A(W,k-k_0)$ at $Z$. For any $Z'\in B(Z,2^{-a-4})$, $B_j\cap B(Z',2^{-a})$ contains a ball of radius $2^{-a-1}-2^{-a-5}$ (follows simply by a few applications of the triangle inequality). This means that at least a $\gtrsim_n 1$ fraction of points in $B(Z',2^{-a})$ are in $B_j$. Using this observation we note that
\begin{equation}\label{eq-eqWintEst}
\left| \mathcal A(V+a_P,k-k_0)\cap \bigcup\limits_{j\in J_a} B_j \right| \gtrsim_{n,k,k_0} \frac{1}{\ell' a^2},\end{equation}
implies
$$\int\limits_{U\in \mathcal G(V,k-k_0)} \cM_{2^{-a}}^{k_0,U} f_{2^{-a}}(V) \,  dU \gtrsim_{n,k,k_0} \frac{1}{\ell' a^2}.$$

As $|a_P|\lesssim 1$, we have $d_{\mathcal A}(P+a_P,V+a_P)\lesssim d_{\mathcal G}(P,V)$ by Lemma~\ref{lem-grassDist}.  We can now find a radius $\delta_a\sim_n 2^{-a}$ such that for any $V\in B_{\mathcal G(n,k)}(P,r_a)$, we have $R_{U,V}\cH_P+a_P$ is also contained $\bigcup_{j\in J_a} B_j$ (using Lemma~\ref{lem-containDist} and the fact that we are working within the unit ball). For such a $V$, \eqref{eq-eqWintEst} holds which completes the proof.
\end{proof}


Using this with \eqref{eq-Fubini} we have,
\begin{equation}\label{eq-fin1}
\int\limits_{U\in \mathcal G(n,k-k_0)} \int\limits_{x \in \R^n/U} f_{2^{-a}}(x+U) \, dx dU \gtrsim_{\epsilon, p} \frac{2^{ap\epsilon}}{\ell'^pa^{2p}}|B_{\mathcal G(n,k)}(\cP'_a,\delta_a)|.
\end{equation}
Within $B_{\mathcal G(n,k)}(\cP'_a,r_a)$ we can find a finite covering of $\cP'_a$ into sets with disjoint interiors with diameter $\delta_a'\sim_{n,k,k_0} \delta_a$ (by working with the $\ell_\infty$ norm of the matrices representing the operators and splitting in cubes for instance). Let $\mathcal C(\cP',a)$ be the collection of this covering set. Using Lemma~\ref{lem-BallVolGrass}, the inequality \eqref{eq-fin1} can now be rewritten as,

$$ \sum\limits_{j\in J_a} r_j^{(k-k_0+1)(n-k+k_0)} \gtrsim_{p,\epsilon,n,k,k_0} \frac{2^{ap\epsilon}}{\ell'^pa^{2p}} |\mathcal C(\cP',a)| 2^{-a k(n-k)}. $$
Multiplying the two sides by $2^{a (k(n-k)-t')}$ and rearranging terms gives us,
$$ \sum\limits_{j\in J_a} r_j^{(k-k_0+1)(n-k+k_0)-(k(n-k)-t')} a^{2p}2^{-2ap\epsilon}  \gtrsim_{p,\epsilon,n,k,k_0} \frac{1}{\ell'^p} |\mathcal C(\cP',a)| 2^{-at'}. $$
$a/2^{a\epsilon} \lesssim 1/\epsilon$ (the maximum is achieved for $a\sim 1/\epsilon$). Summing the above over $a$ we have,
$$ \sum\limits_{j=1}^{\infty} r_j^{(k-k_0+1)(n-k+k_0)-(k(n-k)-t')}  \gtrsim_{p,\epsilon,n,k,k_0,\ell'} \sum_{a=1}^{\infty} |\mathcal C(\cP',a)| 2^{-at'}.$$
For $a\geq 1$, the set $\mathcal C(\cP',a)$ contains balls of diameter $\delta'_a\sim_{n,k,k_0} 2^{-a}$ covering $\cP'$. As $\cP'$ has dimension at least $t'=t-\eta$, this shows that $\cS$ has dimension at least $$(k-k_0+1)(n-k+k_0)-(k(n-k)-t+\eta).$$ As $\eps,\eta$ can be made arbitrarily small, this implies that $\cS$ has dimension at least 
$$(k-k_0+1)(n-k+k_0) - k(n-k) + t.$$ Thus, by H\'era's Furstenberg set bound (Theorem~\ref{thm-heraFurst}) we are done.
 \end{proof} 

We see that the above argument uses the maximal function bounds for $k$-flats in a blackbox fashion. If they were known for a smaller $k_0$ then our results will hold for all $k\ge k_0+1$ (we want maximal function bounds for $k_0$ flats in $\R^{n-k+k_0}$). To be precise, we have the following statement (we omit the proof as it is identical to the proof of Theorem~\ref{thm-main}).

\begin{thm}
Let $n_0,k_0$ be integers and $\mathcal M_\delta^{k_0}$ be the maximal function for $k_0$-flats over $\R^{n_0}$. If for all $\epsilon>0$ and some $p>0$ the following holds, 
\begin{equation}\label{eqn:generalmaxstatement}
\lVert \mathcal M_\delta^{k_0} f\rVert_{L^{p}(\mathcal G(n_0,k_0))} \lesssim_{n_0} \delta^{-\epsilon} \lVert f\rVert_{L^{p}(\R^{n_0})},
\end{equation}
then for $k = k_0 + 1$, $n=1+n_0$, $s\in (k_0,k]$, and $t\in (0,k(n-k)]$, any $(s,t;k)$-spread Furstenberg set $F\subset \R^n$ satisfies
\[
\dim F \geq n-k + s - \frac{k(n-k) - t}{\lceil s\rceil -k_0 +1}.
\]
\end{thm}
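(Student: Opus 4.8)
The plan is to observe that this final theorem is \emph{not} a new argument at all but a parametrized restatement of the proof of Theorem~\ref{thm-main}, in which the only place the specific maximal inequality (Theorem~\ref{thm:realKakeyalog}) is invoked has been replaced by the abstract hypothesis \eqref{eqn:generalmaxstatement}. So I would first go through the proof of Theorem~\ref{thm-main} and isolate every point where $k_0$, $n$, and the exponent $p=(n-k_0-1)/2$ enter, checking that each use is either (a) purely formal (valid for any integers $n_0,k_0$ with $k=k_0+1$, $n=n_0+1$), or (b) an appeal to Theorem~\ref{thm:realKakeyalog}. The key structural facts I would reuse verbatim are: the reduction to $F$ bounded inside $B(0,1/8)$ and to $0<H^s(F\cap(P+a_P))<\infty$ via Lemma~\ref{lem:posHausMea}; the concentration step via Marstrand slicing (Theorem~\ref{thm:mattilaSlice}) producing, for each $P\in\cP$, a positive measure of $(k-k_0)=1$-codimension-inside-$P$ flats meeting $F$ in dimension $\ge s-k_0$; the pigeonholing into $\cP'=\cP_{\ell_\eta}$ and the scale decomposition into $J_a$; the identification $\mathcal A(n,k-k_0)\cong \bigsqcup_{U\in\mathcal G(n,k-k_0)} U^\perp$ and the observation that a $k$-subspace $V\supset U$ corresponds, modulo $U$, to a $k_0$-flat in $\R^n/U\cong\R^{n_0}$ — this is exactly where a $k_0$-flat maximal operator over an $n_0$-dimensional ambient space is needed, which is why the hypothesis is stated for $\R^{n_0}$.

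Second, I would rewrite the block of inequalities \eqref{eq-volBreak}–\eqref{eq-Fubini} with the generic exponent $p$ from \eqref{eqn:generalmaxstatement} in place of $(n-k_0-1)/2$: the decomposition \eqref{eq-volBreak} is measure-theoretic and exponent-free; the first line of \eqref{eq-Fubini} is precisely one application of \eqref{eqn:generalmaxstatement} (with $n_0=n-k+k_0=n-1$, which equals the dimension of $\R^n/U$ since $\dim U=k-k_0=k-1$ when $k=k_0+1$ — wait, more carefully: $\dim(\R^n/U)=n-(k-k_0)$, and we need this to equal $n_0$, i.e. $n_0=n-k+k_0$, consistent with $n=n_0+1,k=k_0+1$); the Fubini swap and the two convexity steps (Jensen for $x^p$) go through for any $p>0$. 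Then the Claim about $\delta_a\sim_n 2^{-a}$ and the lower bound $\int_{U\subset V}\cM^{k_0,U}_{2^{-a}}f_{2^{-a}}(V)\,dU\gtrsim 1/(\ell'a^2)$ uses only Lemmas~\ref{lem-grassDist}, \ref{lem-containDist}, \ref{lem-rotatUV} and the unit-ball normalization — no maximal inequality — so it transfers unchanged, as does the ball-volume bookkeeping via Lemma~\ref{lem-BallVolGrass}. The final accounting multiplying by $2^{a(k(n-k)-t')}$, summing over $a$ using $a2^{-a\epsilon}\lesssim 1/\epsilon$, and letting $\epsilon,\eta\to0$ yields $\dim\cS\ge (k-k_0+1)(n-k+k_0)-k(n-k)+t$ exactly as before, and then Theorem~\ref{thm-heraFurst} applied to the $(s-k_0,\alpha;k-k_0)$-Furstenberg set $\cS$ gives the stated bound (one checks the arithmetic: $\dim F\ge (s-k_0)+\frac{\alpha-(k-k_0-\lceil s-k_0\rceil)(n-k+k_0)}{\lceil s-k_0\rceil+1}$ simplifies, using $\lceil s-k_0\rceil=\lceil s\rceil-k_0$ and $\alpha=(k-k_0+1)(n-k+k_0)-k(n-k)+t$, to $n-k+s-\frac{k(n-k)-t}{\lceil s\rceil-k_0+1}$).

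The honest summary, therefore, is that there is essentially nothing to prove beyond bookkeeping: the proof of Theorem~\ref{thm-main} was \emph{written} so that Theorem~\ref{thm:realKakeyalog} is a strict black box entering at exactly one line with exactly one exponent, and \eqref{eqn:generalmaxstatement} is the minimal hypothesis making that line valid. The only genuine points to verify are dimensional: that $k=k_0+1$ forces $\dim U=k-k_0=1$\footnote{so $\cS$ lives in $\mathcal A(n,1)$ only in the smallest case; in general $\dim U=k-k_0$} and $\dim(\R^n/U)=n-(k-k_0)=n_0$ when $n=n_0+1$, so the hypothesized $k_0$-flat maximal bound over $\R^{n_0}$ is exactly what is called for, and that the H\'era-bound arithmetic closes. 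I expect the \textbf{main (indeed only) obstacle} to be a purely expository one: making sure that in the abstract restatement one does not accidentally need the maximal inequality at a \emph{second} scale or in a \emph{second} ambient dimension — i.e. confirming that the concentration step (Marstrand slicing) and the affine-Furstenberg step (H\'era) are genuinely independent of $p$ and of the maximal hypothesis, which a careful reading of the Theorem~\ref{thm-main} proof confirms they are. Hence the paper's remark that ``we omit the proof as it is identical'' is justified, and a full writeup would simply be the proof of Theorem~\ref{thm-main} with $(n-k_0-1)/2$ replaced by $p$ throughout and the citation of Theorem~\ref{thm:realKakeyalog} replaced by a citation of \eqref{eqn:generalmaxstatement}.
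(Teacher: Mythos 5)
Your proposal is correct and matches the paper's approach exactly: the authors state that they omit the proof precisely because it is identical to that of Theorem~\ref{thm-main} with the invocation of Theorem~\ref{thm:realKakeyalog} (and the exponent $p=(n-k_0-1)/2$) replaced by the abstract hypothesis \eqref{eqn:generalmaxstatement} (and its exponent $p$). Your dimensional checks ($\dim(\R^n/U)=n-(k-k_0)=n_0$) and the final H\'era-bound arithmetic are correct.
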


Outside of Theorem~\ref{thm-main} we can instantiate the above with the maximal Kakeya bounds for tubes in planes due to Cordoba~\cite{cordobakakeya} to get dimension lower bounds for spread Furstenberg sets associated to hyperplanes. In particular, Cordoba proved that \eqref{eqn:generalmaxstatement} holds for $k_0 = 1$ and $n_0 = 2$. Thus, we obtain Corollary \ref{cor:hyperplaneintro} which we restate here.

\begin{cor}
For $n\ge 3$, $s\in (1,n-1]$,  and $t\in (0,n-1]$, any $(s,t;n-1)$-spread Furstenberg set $F\subset \R^n$ satisfies
\[
\dim F \geq 1 + s - \frac{n-1-t}{\lceil s\rceil}.
\]
\end{cor}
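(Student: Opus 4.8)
The plan is to recognize this corollary as a direct instantiation of the general transfer theorem stated just above (the one reducing spread Furstenberg bounds to maximal function estimates), applied in the borderline case $k_0 = 1$. The hypothesis of that theorem is the maximal inequality $\lVert \mathcal M_\delta^{k_0} f\rVert_{L^p(\mathcal G(n_0,k_0))} \lesssim_{n_0} \delta^{-\epsilon}\lVert f\rVert_{L^p(\R^{n_0})}$, and we need it for $k_0 = 1$, $n_0 = 2$; here the Kakeya maximal function for lines in the plane satisfies exactly this bound with $p = 2 = (n_0+1)/2$ by the classical theorem of C\'ordoba \cite{cordobakakeya}. So the first (and essentially only) step is to cite C\'ordoba's bound as supplying \eqref{eqn:generalmaxstatement} in this regime.

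With that input, I would apply the general theorem with $k = k_0 + 1 = 2$ and $n = n_0 + 1$ — wait, that is not the hyperplane case. Let me restate: we want hyperplanes in $\R^n$, i.e.\ $k = n-1$, and we want to use C\'ordoba's planar Kakeya bound, which concerns lines in $\R^2$. The point is that inside a $k$-flat $P = H$ (a hyperplane), a subspace $U$ of codimension one in $H$ has codimension two in $\R^n$, and the family of translates of $U$ sitting inside hyperplanes through $U$ forms a pencil of lines in the $2$-dimensional quotient $\R^n/U$. Thus the relevant maximal operator is the Kakeya maximal function for $1$-flats in $\R^{2}$, i.e.\ $k_0 = 1$ and $n_0 = 2$, and one runs the three-step scheme (concentration via Marstrand slicing, the Kakeya step via C\'ordoba's maximal bound, and the affine Furstenberg step via H\'era's Theorem~\ref{thm-heraFurst}) with $k-k_0 = n-2$, exactly as in the proof of Theorem~\ref{thm-main} but with the weaker planar maximal bound replacing Bourgain--Oberlin. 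Concretely, one substitutes $k = n-1$, $k_0 = 1$, $n - k = 1$, $k(n-k) = n-1$ into the conclusion $\dim F \ge n-k+s - \frac{k(n-k)-t}{\lceil s\rceil - k_0 + 1}$, which collapses to $\dim F \ge 1 + s - \frac{n-1-t}{\lceil s\rceil}$, as claimed. The constraint $s > k_0 = 1$ and $t \in (0, k(n-k)] = (0,n-1]$ are precisely the hypotheses $s \in (1,n-1]$, $t \in (0,n-1]$, and $n \ge 3$ ensures $k = n-1 \ge 2 = k_0 + 1$.

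I do not expect any genuine obstacle here, since the heavy lifting is entirely contained in the already-established general theorem; the only thing to verify is the bookkeeping that C\'ordoba's classical estimate is the $p = (n_0+1)/2$, $(n_0,k_0) = (2,1)$ case of \eqref{eqn:generalmaxstatement}, and that the arithmetic of specializing the exponents is correct. If I wanted to be careful I would double-check that C\'ordoba's bound is stated for the maximal function over all translates (the $\sup_{a \in U^\perp}$), which it is, and that the paper's normalization of the maximal operator matches (both are averaged over the $\delta$-tube), which it does. So the proof is: invoke C\'ordoba's planar Kakeya maximal inequality, feed it into the general transfer theorem with $(n_0,k_0) = (2,1)$, and simplify the resulting exponent.
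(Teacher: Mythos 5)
Your proposal is correct and takes essentially the same route as the paper. You correctly identify C\'ordoba's planar Kakeya maximal inequality as the required input, recognize the key geometric fact (a codimension-one subspace $U$ of a hyperplane has codimension two in $\R^n$, so the relevant maximal operator lives on lines in the two-dimensional quotient $\R^n/U$), and you carry out the substitution $k=n-1$, $k_0=1$, $n-k=1$, $k(n-k)=n-1$ correctly. You also catch a genuine subtlety that the paper glosses over: the general transfer theorem as literally stated with $k=k_0+1$ and $n=n_0+1$ pins down $n=3$, so for general $n\ge 3$ one must, as you say, re-run the proof of Theorem~\ref{thm-main} with $k_0=1$ and C\'ordoba's bound in place of Bourgain--Oberlin, using that the needed maximal estimate lives in $\R^{n-k+k_0}=\R^2$. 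This is exactly what the paper intends.

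One small slip: you wrote $p=2=(n_0+1)/2$, but for $n_0=2$ that parenthetical gives $3/2$, not $2$. The correct identification is simply $p=n_0=2$ (C\'ordoba's bound is the $L^2\to L^2$ estimate in the plane). Since the general transfer theorem only requires \emph{some} $p>0$, this does not affect the validity of the argument, but the formula you quoted is wrong.
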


In fact, for all values of $s \in (1,n-1]$, there is a value of $t$ such that there exists an $(s,t;n-1)$-spread Furstenberg set such that the above inequality is sharp. To see this, let $X\subset \R^n$ be an $s$-dimensional set contained in a $\lceil s\rceil$-dimensional subspace. I.e., there exists an $H \in \mathcal G(n,\lceil s\rceil)$ such that $X\subset H$. Then, take $\mathcal P$ to be the set of hyperplanes that contain $X$. Notice that $t:= \dim \mathcal P = n-1 - \lceil s\rceil$ as $\mathcal P$ is homeomorphic to $\mathcal G(n-\lceil s\rceil, n-1-\lceil s\rceil).$ Hence, $X$ is an $s$-dimensional $(s,t;n-1)$-spread Furstenberg set, which agrees with the above bound.

\section{Hyperplanes: Spread Furstenberg Sets to Furstenberg Sets} \label{sec:appendix}

In this section, we prove Proposition \ref{prop:projectivetrans}, which we restate here for convenience.

\begin{prop}\label{prop:appendixstatement}
    Let $F \subset \R^n$ be an $(s,t;n-1)$-Furstenberg set. Then, there exists a projective transformation $\phi$ such that $\phi(F)$ is an $(s,\min\{t,n-1\};n-1)$-spread Furstenberg set.
\end{prop}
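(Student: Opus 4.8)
The plan is to use classical point-hyperplane duality in projective space together with Marstrand's projection theorem. The starting point is that $F\subset\R^n$ is an $(s,t;n-1)$-Furstenberg set, so there is a set $\mathcal P\subset\mathcal A(n,n-1)$ of affine hyperplanes with $\dim\mathcal P\ge t$ and $\dim(F\cap P)\ge s$ for all $P\in\mathcal P$. Embed $\R^n$ into $\mathbb{RP}^n$ and recall that point-hyperplane duality exchanges the point $x\in\mathbb{RP}^n$ with the dual hyperplane $x^*\subset(\mathbb{RP}^n)^*$, with the incidence relation $x\in H \iff H^*\in x^*$ preserved; this duality is a projective isomorphism $\mathbb{RP}^n\to(\mathbb{RP}^n)^*$, hence bi-Lipschitz on compact pieces away from a hyperplane at infinity and in particular preserves Hausdorff dimension. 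Under duality the family $\mathcal P$ of hyperplanes becomes a set $\mathcal P^*$ of \emph{points} in $(\mathbb{RP}^n)^*$ with $\dim\mathcal P^*\ge t$, and each hyperplane $P\in\mathcal P$ with its $s$-dimensional slice $F\cap P$ becomes a point $P^*$ lying on the dual hyperplane $(F\cap P)^*$, which is the intersection of the hyperplanes $x^*$ over $x\in F\cap P$.

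The key step is to reinterpret the spread condition. After dualizing, we want $\phi(F)$ to be a $(s,\min\{t,n-1\};n-1)$-spread Furstenberg set, i.e. we want a $t'$-dimensional (with $t'=\min\{t,n-1\}$) set of hyperplane \emph{directions} $\mathcal Q\subset\mathcal G(n,n-1)$ such that each direction $Q\in\mathcal Q$ admits a translate meeting $\phi(F)$ in dimension $\ge s$. Directions of hyperplanes in $\R^n$ correspond to points of $\mathcal G(n,n-1)\cong\mathbb{RP}^{n-1}$, which we view as the hyperplane at infinity of $\mathbb{RP}^n$. So after choosing a generic projective transformation $\phi$ (acting on the dual side, then dualizing back), the idea is to arrange that the $t$-dimensional set $\mathcal P^*$ of points is sent into, or projected onto, the hyperplane at infinity: then each of those points becomes a hyperplane direction, and the corresponding dual hyperplane $(F\cap P)^*$ becomes (the closure of) an affine hyperplane in that direction containing an $s$-dimensional piece of $\phi(F)$. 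Concretely, fix a generic hyperplane $\Pi\subset(\mathbb{RP}^n)^*$ and let $\pi$ be the radial projection from $(\mathbb{RP}^n)^*\setminus\{*\}$ onto $\Pi$ from a generic center; then compose with a projective map taking $\Pi$ to the hyperplane at infinity. This is where Marstrand's projection theorem enters: the projection of the $t$-dimensional set $\mathcal P^*$ onto a generic hyperplane $\Pi\cong\mathbb{RP}^{n-1}$ has dimension $\min\{t,n-1\}$ for a.e. choice of projection, which is exactly the target value $t'$. One must also check the projection does not collapse the \emph{incidence} structure, i.e. that for a generic choice, distinct directions still come with genuinely $s$-dimensional slices — this follows because a generic projection is injective on $\mathcal P^*$ when $t\le n-1$ (again Marstrand, or just genericity of the projection center relative to the secant variety), and when $t>n-1$ one only needs a positive-dimensional set of good directions, which a pigeonholing/Fubini argument over the fibers of $\pi$ supplies.

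I would carry this out in the following order: (1) set up projective space, the duality map, and record that it and projective transformations preserve Hausdorff dimension (this is the content of the remark after Corollary~\ref{cor:hyperplaneintro}); (2) translate the Furstenberg hypothesis on $F$ into the dual statement about the point set $\mathcal P^*$ and the associated dual hyperplanes; (3) invoke Marstrand's projection theorem to pick a generic projection $\pi$ of $\mathcal P^*$ onto a hyperplane with image of dimension $\min\{t,n-1\}$, and realize $\pi$ (followed by a projectivity) as the action of a projective transformation $\phi$ on the original space that sends the hyperplane family's directions to a $\min\{t,n-1\}$-dimensional subset of $\mathcal G(n,n-1)$; (4) verify the slices survive, i.e. for (a.e., resp. a positive-dimensional set of) directions $Q$ in the image there is a translate $Q+a_Q$ with $\dim(\phi(F)\cap(Q+a_Q))\ge s$, by tracking the dual hyperplanes $(F\cap P)^*$ through $\phi$ and using that $\dim(F\cap P)\ge s$ is preserved; (5) assemble these into the assertion that $\phi(F)$ is an $(s,\min\{t,n-1\};n-1)$-spread Furstenberg set. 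The main obstacle I anticipate is step (4) combined with the measurability/Borel bookkeeping: the duality and projective maps behave badly on the hyperplane at infinity (and on a lower-dimensional exceptional set), so I would need to first discard a negligible sub-family of $\mathcal P$ (those hyperplanes through a fixed bad hyperplane, a set of dimension $<t$ generically, hence removable without changing $\dim\mathcal P$) and then argue uniformly on the complement where everything is bi-Lipschitz; ensuring the exceptional set from Marstrand and the exceptional set from the duality's blow-up locus can be chosen compatibly — so that a single $\phi$ works — is the delicate point.
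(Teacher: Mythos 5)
Your proposal follows essentially the same route as the paper: dualize the family of hyperplanes $\mathcal P$ to a $t$-dimensional point set $\mathbf D^\ast(\mathcal P)\subset\R^n$, apply Marstrand's projection theorem to find a hyperplane $U\in\mathcal G(n,n-1)$ with $\dim\pi_U(\mathbf D^\ast(\mathcal P))=\min\{t,n-1\}$, realize that projection as a projective change of coordinates $\phi^\ast$ on the dual side (equivalently, one that sends $U$ to the hyperplane at infinity), and observe that the corresponding $\phi$ on the primal side makes the direction map $p\colon\mathcal A(n,n-1)\to\mathcal G(n,n-1)$ applied to $\phi(\mathcal P)$ hit a $\min\{t,n-1\}$-dimensional set. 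That is exactly the paper's argument.

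The place where your write-up goes a bit astray is step (4), the worry about ``collapsing the incidence structure'' and the ensuing injectivity/pigeonholing discussion. This is a non-issue once you observe that $\phi$ is a genuine projective transformation of $\R^n$ acting bijectively on $F$ and on each hyperplane $P\in\mathcal P$ simultaneously, and preserving Hausdorff dimension. Hence for every $P$ one has $\dim\bigl(\phi(F)\cap\phi(P)\bigr)=\dim(F\cap P)\ge s$ automatically, and for a direction $Q=p(\phi(P))$ one simply takes $a_Q$ to be the unique translation vector with $Q+a_Q=\phi(P)$; if several $P$'s give the same $Q$, pick any one of them. No injectivity of the dual projection is needed, and no Fubini/pigeonholing is needed even when $t>n-1$: the Furstenberg structure is carried along intact because you transform, you do not project, in the primal space. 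The projection only enters on the dual side as a device to pick the right $\phi$. The one bookkeeping issue you raise that is real (but minor, and unaddressed by the paper as well) is that the chart $\mathbf D^\ast$ misses the vertical hyperplanes; this is handled by a preliminary rotation or by noting that discarding a sub-family of $\mathcal P$ of smaller dimension does not affect $\dim\mathcal P$.
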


\noindent Our proof utilizes point-hyperplane duality and Marstrand's projection theorem.

Point-hyperplane duality has proven to be a helpful tool in this area of harmonic analysis and geometric measure theory. Full details of this duality are contained in \cite[Sections 6.1-6.2]{dabrowski2021integrability}, but the core idea can be seen in the plane. Over $\R^2$, one can identify points $(m,b) \in \R^2$ with the lines $y = mx + b$, and vice versa. In particular, let \[\mathbf{D}:\R^2 \ni (m,b) \mapsto \{y = mx + b: x\in \R\}\in \mathcal A(2,1)\]
and 
\[
\mathbf{D}^\ast : \mathrm{im}\, \mathbf{D}\ni \{y = mx + b\}\mapsto (-m,b) \in \R^2.
\]
The minus sign here is used to preserve incidence relations. In particular, given $L \in \mathrm{im}\mathbf{D}$, then $x\in L \iff \mathbf{D}^\ast(L) \in \mathbf{D}(x)$, though we won't need this here. In the plane, this relation is referred to as \textit{point-line duality}. In higher dimensions there is an analogous notion of \textit{point-hyperplane duality}, with the more general map $\mathbf{D}:\R^n \to \mathcal A(n,n-1)$ given by 
\[
(x_1,\dots, x_n) \mapsto \left\{(y_1,\dots, y_{n-1}, \sum_{i=1}^n x_iy_i + x_n) : (y_1,\dots, y_{n-1}) \in \R^{n-1}\right\}.
\]
There is a similar higher dimensional analogue of $\mathbf{D}^\ast: \mathrm{im}\,\mathbf{D}\to \R^n$, see \cite{dabrowski2021integrability}. Note that this map is dimension preserving. For our purposes, this will allow us to think about our hyperplanes in Proposition \ref{prop:appendixstatement} as points in $\R^n$.

\begin{rem}
    The application of point-hyperplane duality to the study of Furstenberg sets has been utilized quite a bit. See for instance \cite{osw, liliu}.
\end{rem}

We will also need Marstrand's projection theorem which we state now. Recall that given $U \in \mathcal G(n,k)$, we denote the orthogonal projection onto $U$ as $\pi_U$.

\begin{thm}[Marstrand's Projection Theorem] \label{thm:marstrand}
    Let $A\subset \R^n$ be a Borel set. Then, for almost every $U\in \mathcal G(n,k)$, 
    \[
    \dim \pi_U(A) = \min\{\dim A, k\}.
    \]
    Here, almost every is with respect to the natural Haar measure $\gamma_{n,k}$ on $\mathcal G(n,k).$
\end{thm}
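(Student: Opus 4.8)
The plan is to reduce to the standard planar/classical statement and then handle the general Grassmannian case via a fibering argument, or alternatively to give the direct energy-integral proof; I will outline the latter as it is self-contained. First I would recall that Hausdorff dimension can be characterized through Riesz energies: for a compact set $A$ with $\dim A > \sigma$ there is a probability measure $\mu$ supported on $A$ with finite $\sigma$-energy $I_\sigma(\mu) = \iint |x-y|^{-\sigma}\,d\mu(x)\,d\mu(y) < \infty$, and conversely existence of such a measure forces $\dim A \geq \sigma$ (Frostman's lemma). Reducing to compact $A$ is harmless since Hausdorff dimension of a Borel set is the supremum of dimensions of its compact subsets, and projections only shrink, so it suffices to prove the lower bound $\dim \pi_U(A) \geq \min\{\dim A, k\}$ for $\gamma_{n,k}$-a.e.\ $U$; the upper bound $\dim \pi_U(A) \leq \min\{\dim A, k\}$ is immediate because $\pi_U$ is Lipschitz and $\pi_U(A) \subseteq U \cong \R^k$.

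For the lower bound, fix $\sigma < \min\{\dim A, k\}$ and a Frostman measure $\mu$ on $A$ with $I_\sigma(\mu) < \infty$. For each $U$ let $\mu_U = (\pi_U)_\# \mu$, a probability measure on $U$. The key computation is to bound the average energy
\[
\int_{\mathcal G(n,k)} I_\sigma(\mu_U)\,d\gamma_{n,k}(U) = \iint \left( \int_{\mathcal G(n,k)} |\pi_U(x) - \pi_U(y)|^{-\sigma}\,d\gamma_{n,k}(U) \right) d\mu(x)\,d\mu(y),
\]
using Fubini. The inner integral depends only on $z = x - y$; writing $z = |z|\,\hat z$ with $\hat z$ a unit vector, one has $|\pi_U(z)| = |z|\,|\pi_U(\hat z)|$, so the inner integral equals $|z|^{-\sigma}\int_{\mathcal G(n,k)} |\pi_U(\hat z)|^{-\sigma}\,d\gamma_{n,k}(U)$. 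By the rotation-invariance of the Haar measure $\gamma_{n,k}$, this last integral is independent of the unit vector $\hat z$ and equals a finite constant $c(n,k,\sigma)$ precisely when $\sigma < k$: the relevant estimate is $\gamma_{n,k}(\{U : |\pi_U(e)| \leq \rho\}) \lesssim \rho^{?}$ (with the exponent large enough that $\int_0^1 \rho^{-\sigma}\,d(\text{that measure}) < \infty$ for $\sigma<k$), which follows from the smooth submersion structure of $U \mapsto \pi_U(e)$ onto the unit ball of $\R^k$. Hence $\int I_\sigma(\mu_U)\,d\gamma_{n,k}(U) = c(n,k,\sigma)\, I_\sigma(\mu) < \infty$, so $I_\sigma(\mu_U) < \infty$ for $\gamma_{n,k}$-a.e.\ $U$, giving $\dim \pi_U(A) \geq \sigma$ a.e. Taking a countable sequence $\sigma_j \uparrow \min\{\dim A, k\}$ and intersecting the full-measure sets yields the theorem.

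The main obstacle is the harmonic-analytic heart of the argument: proving that the spherical average $\int_{\mathcal G(n,k)} |\pi_U(\hat z)|^{-\sigma}\,d\gamma_{n,k}(U)$ is finite exactly in the range $\sigma < k$. This requires understanding the distribution of $|\pi_U(e)|$ for fixed unit $e$ under the Haar measure—equivalently, controlling how much Haar mass sits near the "degenerate" configurations where $e$ is nearly orthogonal to $U$—and showing the resulting singularity is integrable up to order $k$. Everything else (Frostman's lemma, reduction to compact sets, the Lipschitz upper bound, the Fubini interchange justified by nonnegativity of the integrand) is routine, and the sharpness at $\sigma = k$ mirrors the fact that $\pi_U(A)$ lives in a $k$-dimensional space and generically has positive $\mathcal L^k$-measure once $\dim A > k$—a statement that can be extracted from the same energy computation with $\sigma = k$ replaced by an $L^2$-density argument, though the theorem as stated only claims the dimension equality.
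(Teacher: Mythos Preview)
The paper does not actually prove Theorem~\ref{thm:marstrand}; it is stated as a classical black-box result with a reference to \cite[Chapter 5]{mattilaBook} (Marstrand for lines in the plane, Mattila for general $k$). So there is no ``paper's own proof'' to compare against.

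That said, your outline is the standard potential-theoretic argument (essentially Kaufman's proof, which is exactly what appears in Mattila's book), and it is correct. The one place you left a question mark is the estimate
\[
\gamma_{n,k}\bigl(\{U : |\pi_U(e)| \leq \rho\}\bigr) \lesssim \rho^{k},
\]
and this is precisely the exponent you need: by rotation invariance you may take $e=e_1$, and then $|\pi_U(e_1)|^2$ is distributed as a $\mathrm{Beta}(k/2,(n-k)/2)$ random variable, so the density of $|\pi_U(e_1)|$ near $0$ is $\sim r^{k-1}$ and the level set has measure $\sim \rho^k$. This gives $\int_{\mathcal G(n,k)} |\pi_U(e)|^{-\sigma}\,d\gamma_{n,k}(U) < \infty$ exactly when $\sigma<k$, closing the argument. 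Everything else in your write-up (Frostman, Lipschitz upper bound, Fubini via nonnegativity, countable exhaustion in $\sigma$) is routine and correctly stated.
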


We briefly remark on Marstrand's projection theorem before moving on. Marstrand proved the above projection theorem in '53 for projections onto lines in the plane, and Mattila generalized the result for projections onto $k$-dimensional subspaces in '75 (see \cite[Chapter 5]{mattilaBook} for more background). Notice that the equality in Marstrand's projection theorem is the best one should expect for a generic $k$-plane, as (naturally) $\dim \pi_U(A) \leq \min\{\dim A, k\}$ for all $U \in \mathcal G(n,k)$. This follows as orthogonal projection is a Lipschitz map. A natural question comes from investigating how often the ``size'' of the projection can be ``smaller.'' E.g., given $s\in (0,\min\{\dim A,k\}]$, consider the set 
\[
E(A) := \{U\in \mathcal G(n,k) : \dim \pi_U(A) < s\}.
\]
Marstrand's projection theorem says that the above set has zero-measure with respect to $\gamma_{n,k}$, but how large can the set of such ``exceptional'' subspaces be, say, in terms of Hausdorff dimension? Note that the set $E(A)$ is referred to as an \textit{exceptional set}, and upper bounds on $\dim E(A)$ are known as \textit{exceptional set estimates}. The investigation of such \textit{exceptional set estimates} has proven quite fruitful in recent years. It may be worth noting that there are close connections between Furstenberg set estimates and exceptional set estimates (especially in the discretized setting), though we don't go into this further here. For more, see e.g. \cite{renwang}.

In any case, point-hyperplane duality and Marstrand's projection theorem are the main tools we need to prove Proposition \ref{prop:appendixstatement}, which we now prove.

\begin{proof}[Proof of Proposition \ref{prop:appendixstatement}]
Let $F \subset \R^n$ be an $(s,t;n-1)$-Furstenberg set with associated set of hyperplanes given by $\mathcal P \subset \mathcal A(n,n-1)$. Without loss of generality, we can assume $\dim \mathcal P = t$. Then, by point-hyperplane duality, we can think about our set of hyperplanes as points in $\R^n$. In particular, $\mathbf{D}^\ast(\mathcal P)$ is a set of points in $\R^n$ with $\dim \mathbf{D}^\ast(\mathcal P) = \dim \mathcal P =t$. Then, by Marstrand's projection theorem (Theorem \ref{thm:marstrand}), it follows that for $\gamma_{n,n-1}$-almost every subspace $U\in \mathcal G(n,n-1)$,
\[
\dim \pi_U(\mathbf{D}^\ast (\mathcal P)) = \min\{\dim \mathbf{D}^\ast(\mathcal P), n-1\} = \min\{t,n-1\}.
\]
Hence, fix one such $U$ such that $\dim \pi_U (\mathbf{D}^\ast(\mathcal P)) = \min\{t,n-1\}.$ 
From this orthogonal projection, we can deduce that there exists a projective transformation $\phi^\ast$ mapping $U$ to hyperplane at infinity (see \cite{mattilaBook}). More precisely, there exists a dimension preserving projective map $\phi^\ast : \R^n \to \mathcal A(n,n-1)$ mapping 
\[
\mathbf{P} = \pi_U(\mathbf{D}^\ast(\mathcal P)) \ni x\mapsto V(x) \in \mathcal G(n,n-1)
\]
such that 
\[
\dim \phi^\ast(\mathbf{P}) = \min\{t,n-1\}.
\]
For more details on this projective transformation and the notation $V(x)$, see \cite[Remark 4.13]{osw}. 
This induces a projective transformation $\phi: \R^n \to \R^n$ such that $\phi(F)$ is an $(s,t;n-1)$-Furstenberg set with associated set of affine hyperplanes $\mathbf{P}:= \mathbf{D}(\phi^\ast(\mathbf{D}^\ast(\mathcal{P}))).$ 

The fact that $\phi(F)$ is an $(s,\min\{t,n-1\};n-1)$-\textit{spread} Furstenberg set follows from the fact that $\mathbf{P}$ is ``spread out.'' In particular, let $p : \mathcal A(n,n-1) \to \mathcal G(n,n-1)$ be the map that sends the affine hyperplane $W = U + a_W$ to $U$ (where $U \in \mathcal G(n,n-1)$). Then, by our application of Marstrand's projection theorem (and our construction of $\phi^\ast$. sending $U$ to infinity), it follows that 
\[
\dim p(\mathbf{P}) = \min\{t,n-1\}.
\]
Hence, it follows that $\phi(F)$ is an $(s,\min\{t,n-1\};n-1)$-spread Furstenberg set with associated set of \textit{subspaces} $p(\mathbf{P})$.
\end{proof}

We conclude with an example to see how this works out geometrically. Let $\mathcal P$ be a 1-dimensional set of 
horizontal lines in the plane, and let $F = \bigcup_{P\in \mathcal P} \ell_P$
 where for all $P\in \mathcal P$, $\ell_P$ is a unit line segment contained in $P.$ Clearly, $F$ is a $(1,1;1)$-Furstenberg set, and the $(s,t;1)$-Furstenberg set estimate of Orponen--Shmerkin \cite{orponenshmerkinabc} and Ren--Wang \cite{renwang} implies that $\dim F = 2$. We claim that $F$ is also a $(1,1;1)$-spread Furstenberg set, though this isn't immediately clear to see from the geometry. In particular, it would be natural to associate each line in $\mathcal P$ to the subspace given by the horizontal line through the origin (this is the map $p: \mathcal A(2,1) \to \mathcal G(2,1)$ that we used in the proof). However, $p(\mathcal{P})$ is just the $x$-axis, so clearly this natural identification using $p$ isn't enough to capture how large the set of $\mathcal P$ is as a set of affine lines. This is where point-line duality and Marstrand's projection theorem can come into play.

Consider the set of points given by $\mathbf{D}^\ast (\mathcal P)$ where again $\mathbf{D}^\ast(\{y = mx + b\}) = (-m,b)$. Since these are horizontal lines, notice that $\mathbf{D}^\ast(\mathcal P)$ is contained on the $x$-axis. Furthermore, notice that the map $p$ described earlier is precisely the same as projecting onto the $y$-axis (and in particular, $\dim p(\mathbf{D}^\ast(\mathcal P)) = 0$). However, projecting onto another direction (say, projecting onto the line with slope 1 through the origin) will be a bi-Lipschitz map, and in particular such a map will preserve the dimension of $\mathbf{D}^\ast(\mathcal P)$. Call the line with slope 1 through the origin $U$. This is precisely where/how we used Marstrand's projection theorem in the proof of Proposition \ref{prop:appendixstatement}. Furthermore, this makes it quite easy to visualize the projective transformation $\phi^\ast$; $\phi^\ast$ is a projective map (technically given by conjugation of matrices in $O(n)$) that sends $U$ to the $y$-axis. This is more or less simply changing the parameterization of the set of affine lines in $\R^2$, but now notice that (by Marstrand's projection theorem and our choice of projective tranformation $\phi^\ast$),
\[
\dim p(\underbrace{\mathbf{D}(\phi^\ast(\mathbf{D}^\ast (\mathcal P)))}_{\mathbf{P}}) = \min\{\dim \mathcal P, 1\} = 1.
\]
Furthermore, $p(\mathbf{P})$ is the associated 1-dimensional set of \textit{subspaces} that shows that $\phi(F)$ is a $(1,1;1)$-\textit{spread} Furstenberg set.

\end{document}